\documentclass[11]{amsart}

\usepackage{amsmath, amstext, amsgen, amsbsy, amsopn, amsfonts, amssymb, graphicx,psfrag}
\usepackage{amsthm}
\usepackage{comment}
\usepackage[normalem]{ulem}
\usepackage{pdfsync}
\usepackage{epstopdf}
\usepackage{color}
\usepackage[thinlines]{easytable}
\usepackage{multirow}
\usepackage{multicol}
\usepackage{hyperref}

\def\e{{\varepsilon}}

\newtheorem{theorem}{Theorem}[section]
\newtheorem{lemma}[theorem]{Lemma}
\newtheorem{corollary}[theorem]{Corollary}
\newtheorem{question}{Question}
\theoremstyle{definition}
\newtheorem*{definition}{Definition}

\theoremstyle{remark}

\newtheorem{example}[theorem]{Example}


\begin{document}



\title{Classifying links and spatial graphs with finite $N$-quandles}

\author{Blake Mellor}
\address{Loyola Marymount University, 1 LMU Drive, Los Angeles, CA 90045}
\email{blake.mellor@lmu.edu}

\date{}
\maketitle
\begin{abstract} 
The fundamental quandle is a complete invariant for unoriented tame knots \cite{JO, Ma} and non-split links \cite{FR}. We prove a relationship between the components of the fundamental quandle and the cosets of the peripheral subgroup(s) in the fundamental group of the knot or link. We extend these relationships to spatial graphs, and to $N$-quandles of links and spatial graphs. As an application, we are able to give a complete list of links with finite $N$-quandles, proving a conjecture from \cite{MS}, and a partial list of spatial graphs with finite $N$-quandles.
\end{abstract}



\section{Introduction}
The fundamental quandle of a link $L$ is an algebraic object that encodes the three Reidemeister moves. For unoriented tame knots, Joyce \cite{JO, JO2} and Matveev \cite{Ma} showed that the fundamental quandle is a complete invariant. The proof involves defining a quandle structure on the cosets of the peripheral subgroup of the knot in the fundamental group, and then proving that the resulting quandle is isomorphic to the fundamental quandle. A similar argument proves that the fundamental quandle is a complete invariant for unoriented, tame, non-split links.

While the fundamental quandle of a knot is a powerful invariant, it is often difficult to compute or to compare. Joyce \cite{JO, JO2} also introduced the fundamental $n$-quandle of a knot, which can be thought of as a quotient of the fundamental quandle where every element of the quandle has ``order'' $n$. These are simpler than the fundamental quandle, and in some cases are even finite. Hoste and Shanahan \cite{HS2} extended the relationship between the fundamental quandle and cosets in the fundamental group to $n$-quandles, and used it to give a complete list of links with finite $n$-quandles. In the current paper, we will further generalize this relationship in two ways: first, by extending it to a broader class of quotients of the fundamental quandle, called $N$-quandles \cite{MS}, and second by extending it to spatial graphs. In each case, we will show the fundamental quandle (or $N$-quandle) is isomorphic to a quandle defined on the cosets of a particular subgroup of the fundamental group (or a quotient of the fundamental group). As an application, we are able to give a complete list of links with finite $N$-quandles (verifying the conjecture in \cite{MS}), and, for graphs which are the singular locus of a three-dimensional orbifold, the list of graphs with finite $N$-quandles (proving part of a conjecture in \cite{BM}).

In Section \ref{S:quandles} we review the definitions of quandles and $N$-quandles, ending with a proof that the conjugation group of a finite $N$-quandle is also finite. In Section \ref{S:fundamental} we introduce the fundamental quandles for links and spatial graphs, and provide topological interpretations of these quandles.  The topological interpretation for knots and links is due to Fenn and Rourke \cite{FR}; we introduce an extension to spatial graphs. Then, in Section \ref{S:cosets}, we prove our main results about relationships between the fundamental quandle (and $N$-quandle) and the fundamental group. We apply these results in Section \ref{S:finite} to classify links and spatial graphs with finite $N$-quandles (a list is provided in the Appendix). Finally, in Section \ref{S:questions} we pose some questions for further investigation.

\section{Quandles and $N$-quandles} \label{S:quandles}

\subsection{Definitions and notation} \label{SS:definitions}

We begin with the definition of a quandle. We refer the reader to \cite{FR}, \cite{JO}, \cite{JO2}, and \cite{WI} for more detailed information.

A {\it  quandle} is a set $Q$ equipped with two binary operations $\rhd$ and $\rhd^{-1}$ that satisfy the following three axioms:
\begin{itemize}
\item[\bf A1.] $x \rhd x =x$ for all $x \in Q$.
\item[\bf A2.] $(x \rhd y) \rhd^{-1} y = x = (x \rhd^{-1} y) \rhd y$ for all $x, y \in Q$.
\item[\bf A3.] $(x \rhd y) \rhd z = (x \rhd z) \rhd (y \rhd z)$ for all $x,y,z \in Q$.
\end{itemize}

The operation $\rhd$ is, in general, not associative. To clarify the distinction between $(x \rhd y) \rhd z$ and $x \rhd (y \rhd z)$, we adopt the exponential notation introduced by Fenn and Rourke in \cite{FR} and denote $x \rhd y$ as $x^y$ and $x \rhd^{-1} y$ as $x^{\bar y}$. With this notation, $x^{yz}$ will be taken to mean $(x^y)^z=(x \rhd y)\rhd z$ whereas $x^{y^z}$ will mean $x\rhd (y \rhd z)$. We also use $x^{y^n}$ (where $n$ is a positive integer) to denote $x^{yy\cdots y}$, with $n$ copies of $y$ in the exponent.

The following useful lemma from \cite{FR} describes how to re-associate a product in a quandle. 

\begin{lemma} \label{leftassoc}
If $x, y, u$ and $v$ are elements of a quandle, then
$$\left(x^u \right)^{\left(y^v \right)}=x^{u \bar v y v} \ \ \ \ \mbox{and}\ \ \ \ \left(x^u \right)^{\overline{\left(y^v \right)}}=x^{u \bar v \bar y v}.$$
\end{lemma}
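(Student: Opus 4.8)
The plan is to peel the compound exponent $y^v$ apart one letter at a time, using axiom \textbf{A3} to trade a conjugation-style rewrite for the extra letters $\bar v$ and $v$, and to induct on the length of the word $v$ (here $\bar v$ denotes the formal inverse word of $v$, so that, by repeated use of \textbf{A2}, both $(\cdot)^{v\bar v}$ and $(\cdot)^{\bar v v}$ are the identity). First I would observe that the factor $u$ plays no essential role: writing $a=x^u$, the first identity is equivalent to $a^{(y^v)}=a^{\bar v y v}$ for an arbitrary quandle element $a$, a generator $y$, and an arbitrary word $v$ (one then simply prepends $u$), and likewise the second identity reduces to $a^{\overline{(y^v)}}=a^{\bar v \bar y v}$. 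So it suffices to prove these two reduced identities by induction on the length of $v$.

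For the base case, $v$ is a single element. Rewriting axiom \textbf{A3} in the form $p^{qr}=(p^r)^{(q^r)}$ and substituting $p=a^{\bar v}$, $q=y$, $r=v$ (so that $(a^{\bar v})^v=a$ by \textbf{A2}) gives
$$a^{(y^v)}=\left(a^{\bar v}\right)^{yv}=a^{\bar v y v}.$$
For the $\rhd^{-1}$ base case, set $b=a^{\overline{(y^v)}}$; then $b^{(y^v)}=a$ by \textbf{A2}, and by the identity just proved $b^{(y^v)}=b^{\bar v y v}$, so $b^{\bar v y v}=a$. Raising both sides to the exponent $\bar v \bar y v$, the left side telescopes to $b$ (cancelling $v\bar v$, then $y\bar y$, then $\bar v v$ via \textbf{A2}), so $b=a^{\bar v \bar y v}$, as required.

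For the inductive step, write $v=v'\zeta$ with $\zeta$ a single element and $v'$ a shorter word, so that $y^v=(y^{v'})^{\zeta}$. Applying the base case with $y^{v'}$ in the role of the generator yields
$$a^{(y^v)}=a^{\bar\zeta\,(y^{v'})\,\zeta}=\left(\left(a^{\bar\zeta}\right)^{(y^{v'})}\right)^{\zeta},$$
which still contains the compound element $y^{v'}$ in the exponent. Peeling that off by the inductive hypothesis applied to the inner product, $\left(a^{\bar\zeta}\right)^{(y^{v'})}=\left(a^{\bar\zeta}\right)^{\bar{v'} y v'}$, and reassembling gives $a^{(y^v)}=a^{\bar\zeta\,\bar{v'}\,y\,v'\,\zeta}$; since $\bar\zeta\,\bar{v'}=\overline{v'\zeta}=\bar v$ and $v'\zeta=v$, this is exactly $a^{\bar v y v}$. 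The $\rhd^{-1}$ identity is handled identically (or deduced from the $\rhd$ identity by the same \textbf{A2} argument as in the base case).

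The main obstacle is precisely the bookkeeping in the inductive step: each application of \textbf{A3} reintroduces a compound element $y^{v'}$ in the middle of the exponent word, which must then be simplified recursively, and one must keep careful track of the word-reversal convention defining $\bar v$ so that the telescoping $\bar\zeta\,\bar{v'}=\bar v$ comes out right. Once these conventions are fixed, everything else is a routine manipulation of the three axioms.
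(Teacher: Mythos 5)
The paper itself offers no proof of this lemma --- it is imported verbatim from Fenn and Rourke \cite{FR} --- so there is nothing internal to compare against; your argument is the standard one for this fact (induction on the length of $v$, using \textbf{A3} to peel one letter off the exponent at a time, and deriving the $\rhd^{-1}$ identity from the $\rhd$ identity by the telescoping \textbf{A2} argument), and it is correct. The reduction to $a^{(y^v)}=a^{\bar v y v}$, the base case, the inductive step with the bookkeeping $\bar\zeta\,\bar{v'}=\overline{v'\zeta}$, and the derivation of the second identity all check out.

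One step you elide: when the single letter $r=\zeta$ being peeled off is an \emph{inverse} letter $\bar z$, the identity $p^{q\bar z}=(p^{\bar z})^{(q^{\bar z})}$ that you call ``axiom \textbf{A3} in the form $p^{qr}=(p^r)^{(q^r)}$'' is not literally \textbf{A3} but its mirror. It does follow in one line --- apply the injective map $(\cdot)^{z}$ to both sides and use \textbf{A2} and \textbf{A3}, or equivalently note that $(\cdot)^{\bar z}$ is the inverse of the automorphism $(\cdot)^{z}$ and hence itself distributes over $\rhd$ --- but since your whole induction rests on applying the base case to arbitrary letters of $v$, including inverse letters, this mirror law should be stated and proved explicitly rather than folded into the citation of \textbf{A3}. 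With that addition the proof is complete.
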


Using Lemma~\ref{leftassoc}, elements in a quandle given by a presentation $\langle S \mid R \rangle$ (where $S$ is a set of generators, and $R$ is a set of relations among the generators) can be represented as equivalence classes of expressions of the form $x^w$ where $x$ is a generator in $S$ and $w$ is a word in the free group on $S$ (with $\bar x$ representing the inverse of $x$).



Two elements $p$ and $q$ of quandle $q$ are in the same {\em component} (or {\em algebraic component}) if $p^w = q$ for some word $w$ in the free group on $Q$. This is an equivalence relations, so the components give a partition of the quandle. A quandle is {\em connected} if it has only one component.

\begin{definition} \label{D:Nquandle}
Given a quandle $Q$ with $k$ ordered components, labeled from 1 to $k$, and a $k$-tuple of natural numbers $N = (n_1, \dots, n_k)$, we say $Q$ is an {\em $N$-quandle} if $x^{y^{n_i}} = x$ whenever $x \in Q$ and $y$ is in the $i$th component of $Q$. 
\end{definition}

In the special case when $n_1=n_2=\cdots = n_k = n$, we have the $n$-quandle introduced by Joyce \cite{JO, JO2}. Note that the ordering of the components in an $N$-quandle is very important; the relations depend intrinsically on knowing which component is associated with which number $n_i$. 

Given a presentation $\langle S \,|\, R\rangle$ of $Q$, a presentation of  the quotient $N$-quandle $Q_N$ is obtained by adding the relations $x^{y^{n_i}}=x$ for every pair of distinct generators $x$ and $y$, where $y$ is in the $i$th component of $Q$.  

\subsection{The conjugation group of a quandle} \label{SS:conj}

In this section we will explain how to naturally associate a group to any quandle or $N$-quandle. For our purposes, it is enough to consider finitely presented quandles.  Suppose a quandle $Q$ has a presentation
$$Q = \langle q_1, \dots, q_s \mid r_1, \dots, r_m \rangle,$$
where each relation $r_i$ has the form $q_{a_i}^{w_i} = q_{b_i}$, with $a_i, b_i \in \{1, \dots, s\}$ and $w_i$ a word in the $q_j$'s and $\overline{q_j}$'s. Then the conjugation group has the presentation
$$Conj(Q) = \langle q_1, \dots, q_s \mid \overline{r_1}, \dots, \overline{r_m} \rangle,$$
where, for any quandle relation $r$ of the form $x^w = y$, $\overline{r}$ is the group relation $w^{-1}xwy^{-1} = 1$ (in the word $w$, $\overline{q_i}$ is interpreted as $q_i^{-1}$). In other words, the quandle operation is replaced by conjugation in the group. This group (under the name {\em Adconj}) was first defined by Joyce \cite{JO, JO2}.

If $Q$ is an $N$-quandle for a $k$-tuple $N = (n_1, \dots, n_k)$ (so $Q$ has $k$ components, denoted $Q_1, \dots, Q_k$), then we can also associate with $Q$ a natural quotient of the conjugation group, denoted $Conj_N(Q)$. Suppose generator $q_i$ is an element of $Q_{j_i}$, then:
$$Conj_N(Q) = \langle q_1, \dots, q_s \mid \overline{r_1}, \dots, \overline{r_m}, q_1^{n_{j_1}}, \dots, q_s^{n_{j_s}} \rangle = Conj(Q)/\langle q_1^{n_{j_1}}, \dots, q_s^{n_{j_s}} \rangle,$$ 
where $\langle q_1^{n_{j_1}}, \dots, q_s^{n_{j_s}} \rangle$ is the normal subgroup generated by $q_1^{n_{j_1}}, \dots, q_s^{n_{j_s}}$.

Our main result in this section is that, if $Q$ is a finite $N$-quandle, then $Conj_N(Q)$ is a finite group. In the special case when $Q$ is an $n$-quandle, this was proved by Joyce in his dissertation \cite{JO2}; our proof is a modified version.

\begin{theorem} \label{T:QtoG}
If $Q$ is a finite $(n_1,\dots,n_k)$-quandle with algebraic components $Q_1, \dots, Q_k$, then $Conj_N(Q)$ is a finite group, and $\vert Conj_N(Q) \vert \leq n_1^{\vert Q_1\vert} \cdots n_k^{\vert Q_k\vert}$.
\end{theorem}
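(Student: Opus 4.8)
The plan is to exploit the following observation: in $Conj_N(Q)$ the group relation $\overline{r}$ attached to a quandle relation $x^q=y$ (with $x,y,q$ generators) is exactly $q^{-1}xq=y$, so conjugation by a generator $a_q$ permutes the generating set $\{a_x : x\in Q\}$ via $a_x\mapsto a_{x\rhd q}$, and this permutation action of $Conj_N(Q)$ on $\{a_x\}\cong Q$ is precisely the self-action of the quandle. I would first use this to reduce to the connected case. Since $\rhd$ preserves algebraic components, each set $\{a_x : x\in Q_i\}$ is conjugation-invariant, so $G_i:=\langle a_x : x\in Q_i\rangle$ is a normal subgroup of $G:=Conj_N(Q)$; the $G_i$ generate $G$, and a product of normal subgroups is a subgroup, so $G=G_1G_2\cdots G_k$ and hence $|G|\le|G_1|\cdots|G_k|$. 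Moreover, all quandle relations among the $a_x$ with $x\in Q_i$, together with $a_x^{n_i}=1$, hold in $G_i$, so $G_i$ is a homomorphic image of $Conj_{N'}(Q_i)$, where $Q_i$ is regarded as a quandle in its own right (an $N'$-quandle, $N'=(n_i,\dots,n_i)$ with one entry per component of $Q_i$). Since $|Q_i|<|Q|$ unless $k=1$, an induction on $|Q|$ (with $|Q|=1$ immediate) reduces the theorem to showing that a finite connected $n$-quandle $Q$ with $s:=|Q|$ elements satisfies $|Conj_n(Q)|\le n^s$.

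For the connected case, fix an ordering $Q=\{q_1,\dots,q_s\}$ and set $\mathcal P:=\langle a_{q_1}\rangle\langle a_{q_2}\rangle\cdots\langle a_{q_s}\rangle\subseteq Conj_n(Q)$, which has at most $n^s$ elements since each $\langle a_{q_i}\rangle$ has order dividing $n$. I would prove $\mathcal P=G$, which is exactly the desired bound. Since $\mathcal P$ is finite, contains the identity, and $G$ is generated by the $a_{q_t}^{\pm 1}$, it is enough to show $\mathcal P\cdot a_{q_t}^{\pm 1}\subseteq\mathcal P$ for all $t$: then $\mathcal P$ is closed under right multiplication by all of $G$, hence closed under multiplication, hence a finite subgroup of $G$ containing every generator, so $\mathcal P=G$.

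To check $\mathcal P\cdot a_{q_t}^{\varepsilon}\subseteq\mathcal P$ ($\varepsilon=\pm1$), one pushes the trailing letter to the left using the identity $a_{q_j}^{e}a_{q_t}^{\varepsilon}=a_{q_t}^{\varepsilon}a_{q_j'}^{e}$, where $q_j'=q_j\rhd q_t$ if $\varepsilon=1$ and $q_j'=q_j\rhd^{-1}q_t$ if $\varepsilon=-1$ (a special case of Lemma~\ref{leftassoc}); note $q_j'\neq q_t$ by axioms A1 and A2. Starting from $p=a_{q_1}^{e_1}\cdots a_{q_s}^{e_s}\in\mathcal P$ and moving $a_{q_t}^{\varepsilon}$ leftward past $a_{q_s}^{e_s},\dots,a_{q_{t+1}}^{e_{t+1}}$ until it reaches $a_{q_t}^{e_t}$, then combining and reducing the exponent modulo $n$, one obtains
$$p\,a_{q_t}^{\varepsilon}=a_{q_1}^{e_1}\cdots a_{q_{t-1}}^{e_{t-1}}\,a_{q_t}^{e_t'}\,a_{q_{t+1}'}^{e_{t+1}}\cdots a_{q_s'}^{e_s}.$$
The main obstacle — and the technical heart, adapted from Joyce's dissertation — is that the relabelled indices $q_{t+1}',\dots,q_s'$, although distinct and different from $q_t$, need not be increasing and may collide with $q_1,\dots,q_{t-1}$, so the right-hand side is not yet in the prescribed form and must be re-sorted, during which further relabelling occurs. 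The crux is to prove that this re-sorting process terminates, i.e. that $\mathcal P$ really is closed under the multiplication above. I would attempt this with a well-founded induction: the total exponent $\sum_i e_i$ (equivalently, word length) is non-increasing under a sorting swap but strictly decreases only when two powers of the same generator merge, so it must be combined lexicographically with an auxiliary statistic — such as the length of the already-sorted prefix, or a count of out-of-order letters — whose behaviour under relabelling can be controlled; alternatively, a secondary induction on $s$ treating the sorted prefix and the scrambled suffix separately. Once termination (hence $\mathcal P=G$) is established, $|G|\le\prod_{i=1}^{s}|\langle a_{q_i}\rangle|\le n^s$, and combining with the reduction of the first paragraph yields $|Conj_N(Q)|\le n_1^{|Q_1|}\cdots n_k^{|Q_k|}$ and, in particular, the finiteness of $Conj_N(Q)$.
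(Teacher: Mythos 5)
Your reduction to the connected case --- decomposing $Conj_N(Q)$ as the product of the normal subgroups generated by the images of the algebraic components and inducting on $|Q|$ --- is sound, and it is a genuinely different organization from the paper's (which sorts all generators at once in a single global ordering). But it only relocates the difficulty, and at the step that actually carries the theorem the proposal stops. After pushing $a_{q_t}^{\varepsilon}$ leftward to position $t$ you are left with a sorted prefix followed by a suffix whose letters have all been relabelled by the bijection $x\mapsto x\rhd^{\varepsilon}q_t$; these relabelled letters may be out of order and may collide with the prefix, so the word must be re-sorted, and you must prove that this re-sorting terminates. You correctly call this the crux, but you verify none of the candidate well-founded statistics you list, and the obvious ones fail: word length is unchanged by any swap that does not merge two powers of the same letter, while an inversion count is not monotone because the swap $a_x^{e}a_y^{\pm1}=a_y^{\pm1}a_{x\rhd^{\pm1}y}^{e}$ replaces $x$ by an essentially arbitrary element of its component and can create new inversions both within the suffix and against the prefix. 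So the claim $\mathcal{P}\cdot a_{q_t}^{\pm1}\subseteq\mathcal{P}$ --- and with it the finiteness of $Conj_N(Q)$ --- is not actually established.

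The paper closes exactly this gap with different bookkeeping, which you could adopt: induct on the minimal length $m$ of a word in the $x_i^{\pm1}$ representing a group element, with the inductive hypothesis that every word of length $m$ can be put in non-decreasing order without increasing its length. Given $x_j^{\epsilon}w$ of length $m+1$, first sort the tail $w$; if its leading letter $x_l^{\delta}$ has $l<j$, perform the single swap $x_j^{\epsilon}x_l^{\delta}=x_l^{\delta}x_t^{\epsilon}$ and then re-sort the new tail $x_t^{\epsilon}w'$, which again has length at most $m$ and so is covered by the inductive hypothesis. The only quantity that needs to be controlled is the index of the leading letter, which strictly decreases at each repetition and is bounded below by $1$, so the loop terminates. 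Substituting this two-level induction for your ``push the trailing letter all the way left, then repair the suffix'' scheme completes the argument; the rest of your proposal (the component decomposition and the count $|\mathcal{P}|\leq n^{s}$) then goes through.
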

\begin{proof}
Let $x_1, x_2, \dots, x_{\vert Q\vert}$ denote the elements of $Q$ and (abusing notation) also the corresponding elements of $Conj_N(Q)$. Suppose $x_i \in Q_j$; then there is a generator $q$ in $Q_j$ such that $q^w = x_i$ in $Q$.  In the group $Conj_N(Q)$, we have the relation $q^{n_j} = 1$, and $w^{-1}qw = x_i$.  Then 
$$x_i^{n_j} = (w^{-1}qw)^{n_j} = w^{-1}q^{n_j}w = w^{-1}w = 1.$$
So each element $x_i$ of $Q$ corresponds to an element of finite order in $Conj_N(Q)$, with the order determined by the algebraic component of $Q$ containing $x_i$.

We will prove inductively that any element $z$ of $Conj_N(Q)$ can be written as a product $z = x_1^{a_1}\cdots x_{\vert Q\vert}^{a_{\vert Q\vert}}$. If $x_i$ is in $Q_j$, then $0 \leq a_i < n_j$, so the number of such products is at most $n_1^{\vert Q_1\vert} \cdots n_k^{\vert Q_k\vert}$, giving the desired bound.

Since the generators of $Conj_N(Q)$ correspond to elements of $Q$, every element of $Conj_N(Q)$ can be written as some word in the $x_i$'s and $x_i^{-1}$'s; we will induct on the minimal length of these words. Certainly, if an element $z$ can be written as a single $x_i$ or $x_i^{-1}$, then we're done (note that $x_i^{-1} = x_i^{n_j - 1}$ for some $n_j$).

Now suppose that any element that can be written as a product of $m$ $x_i^{\pm 1}$'s can be rewritten as a product with the subscripts in non-decreasing order from left to right, {\em without} increasing the length of the product. Suppose $z$ has a minimal length of $m+1$ as a product of $x_i^{\pm 1}$'s. Then $z = x_j^\epsilon w$ for some $x_j$ and some word $w$ of length $m$ ($\epsilon = \pm 1$).  By our inductive hypothesis, $w$ can be rewritten with the subscripts in non-decreasing order, and still have length at most $m$. Now $w = x_l^\delta w'$ for some $x_l$, so $z = x_j^\epsilon x_l^\delta w'$.

If $j \leq l$, then $z$ is now a product with subscripts in non-decreasing order, and we're done. So suppose $l < j$.  In the quandle $Q$, $x_j \rhd^\delta x_l = x_t$ for some $t$. In the group $Conj_N(Q)$, this corresponds to a relation $x_l^{-\delta}x_j x_l^{\delta} = x_t$.  Hence $x_l^{-\delta}x_j^\epsilon x_l^{\delta} = x_t^\epsilon$, and so $x_j^\epsilon x_l^{\delta} = x_l^{\delta}x_t^\epsilon$. So we can rewrite $z = x_l^{\delta}x_t^\epsilon w'$, where $l < j$. But now $x_t^\epsilon w'$ is a word of length at most $m$, so it can be rewritten (without increasing its length) so that the subscripts are in non-decreasing order. We can repeat this process, each time reducing the subscript of the first factor of $z$. The process will eventually terminate with all subscripts in non-decreasing order (with the first factor as $x_1^\epsilon$, if not sooner).

So, by induction, every element $z$ can be written as a product of $x_i$'s and $x_i^{-1}$'s with the subscripts in non-decreasing order from left to right, and hence as a product $x_1^{a_1}\cdots x_{\vert Q\vert}^{a_{\vert Q\vert}}$.
\end{proof}

\section{Fundamental quandles of Links and Spatial Graphs} \label{S:fundamental}

\subsection{Wirtinger presentations for fundamental quandles} \label{SS:wirtinger}

If $\Gamma$ is an oriented knot, link or spatial graph in $\mathbb{S}^3$, then a presentation of its fundamental quandle, $Q(\Gamma)$, can be derived from a regular diagram $D$ of $\Gamma$ by a process similar to the Wirtinger algorithm. This was developed for links by Joyce \cite{JO}, and extended to spatial graphs by Niebrzydowski \cite{Ni}. We assign a quandle generator $x_1, x_2, \dots , x_n$ to each arc of $D$ (or, if $\Gamma$ is a spatial graph, to each arc of an edge), then at each crossing introduce the relation $x_i=x_k^{ x_j}$ as shown on the left in Figure~\ref{relations}. For spatial graphs, at a vertex with incident edges $x_1, x_2, \dots x_n$, as shown on the right in Figure~\ref{relations}, we introduce the relation $y^{x_1^{\e_1}x_2^{\e_2}\cdots x_n^{\e_n}} = y$ (where $\e_i = 1$ if $a_i$ is directed into the vertex, and $\e_i = -1$ if $a_i$ is directed out from the vertex). Here $y$ can be {\em any} element of the quandle; for a finite presentation it suffices to consider the cases when $y$ is a generator of the quandle. It is easy to check that the Reidemeister moves for links and spatial graphs do not change the quandle given by this presentation so that the quandle is indeed an invariant of $\Gamma$.

\begin{figure}[h]
$$\includegraphics[height=1in]{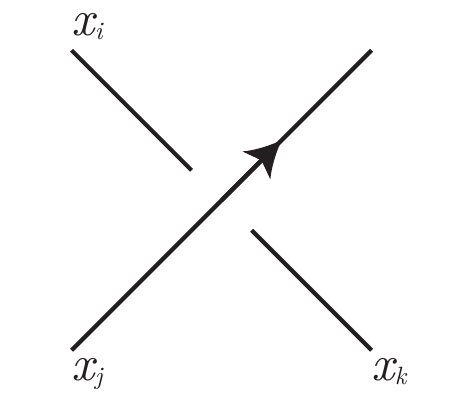} \qquad \qquad \includegraphics[height = 1.1in]{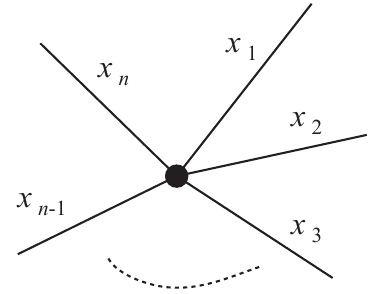}$$
$$\qquad  x_i=x_k^{x_j} \qquad\qquad\qquad \qquad y^{x_1^{\e_1}x_2^{\e_2}\cdots x_n^{\e_n}} = y$$
\caption{The fundamental quandle relations at a crossing and at a vertex.}
\label{relations}
\end{figure}

Fenn and Rourke \cite{FR} observed that, for a link $L$, the components of the quandle $Q(L)$ are in bijective correspondence with the components of $L$, with each component of the quandle containing the generators of the Wirtinger presentation associated to the corresponding link component. Similarly, for a spatial graph $G$, the components of the quandle $Q(G)$ correspond to the edges of the graph \cite{BM}.  

\begin{definition}
Suppose $\Gamma$ is a link (resp. graph) with $k$ components (resp. edges), such that each component (resp. edge) $c_i$ is labeled with a natural number $n_i$, and let $N = (n_1, \dots, n_k)$.  If the fundamental quandle $Q(\Gamma)$ has the Wirtinger presentation from a diagram $D$, then $Q_N(\Gamma)$ is the quotient obtained by adding relations $x^{y^{n_i}} = x$ for each pair of distinct generators $x$ and $y$ where $y$ corresponds to an arc of component (or edge) $c_i$ in the diagram $D$.  $Q_N(\Gamma)$ is called the {\em fundamental $N$-quandle of the link or graph} (and depends on the ordering of the link components/edges).
\end{definition}

If $L$ is a link, then the groups $Conj(Q(L))$ and $Conj_N(Q_N(L))$ have natural interpretations. From the Wirtinger presentation description of the fundamental quandle, it is immediate that $Conj(Q(L))$ is the fundamental group $\pi_1(\mathbb{S}^3 - L)$. If we select a meridian $\mu_i$ for each component, then $Conj_N(Q_N(L)) = \pi_1(\mathbb{S}^3 - L)/\langle \mu_i^{n_i} \rangle$, where $\langle \mu_i^{n_i} \rangle$ is the normal subgroup generated by $\{\mu_i^{n_i}\}$.

If $G$ is a spatial graph, then the quandle relation at the $i$th vertex has the form $x^{w_i} = x$.  In $Conj(Q(G))$, this becomes $w_i^{-1}xw_ix^{-1} = 1$, which is a weaker relation than the corresponding vertex relation in $\pi_1(\mathbb{S}^3 - G)$, namely $w_i = 1$.  So in this case, we have that $Conj(Q(G))/\langle w_i \rangle = \pi_1(\mathbb{S}^3 - G)$, where $\langle w_i\rangle$ is the normal subgroup generated by $\{w_i\}$.  Similarly, if we select a meridian $\mu_i$ for each edge of the graph, then $Conj_N(Q_N(G))/\langle w_i \rangle = \pi_1(\mathbb{S}^3 - G)/\langle \mu_i^{n_i} \rangle$, where $\langle \mu_i^{n_i} \rangle$ is the normal subgroup generated by $\{\mu_i^{n_i}\}$.

This gives us the following Corollary to Theorem \ref{T:QtoG}:

\begin{corollary} \label{C:QtoG}
If $\Gamma$ is a link or spatial graph, and $Q_N(\Gamma)$ is a finite quandle, then $\pi_1(\mathbb{S}^3 - \Gamma)/\langle \mu_i^{n_i} \rangle$ is a finite group.
\end{corollary}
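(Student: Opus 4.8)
The plan is to deduce Corollary~\ref{C:QtoG} directly from Theorem~\ref{T:QtoG} together with the interpretations of the conjugation groups established in the preceding paragraphs. Since $Q_N(\Gamma)$ is assumed to be a finite quandle, Theorem~\ref{T:QtoG} immediately gives that $Conj_N(Q_N(\Gamma))$ is a finite group (with the stated bound on its order, though we only need finiteness here). So the task reduces to relating $\pi_1(\mathbb{S}^3 - \Gamma)/\langle \mu_i^{n_i}\rangle$ to $Conj_N(Q_N(\Gamma))$, and then observing that a quotient of a finite group is finite.

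First I would split into the two cases already discussed in the text. If $\Gamma = L$ is a link, then the paragraph above shows $Conj_N(Q_N(L)) = \pi_1(\mathbb{S}^3 - L)/\langle \mu_i^{n_i}\rangle$ outright, so the finiteness of the left side is exactly the finiteness of the right side and there is nothing further to check. If $\Gamma = G$ is a spatial graph, then the relevant identity is $Conj_N(Q_N(G))/\langle w_i \rangle = \pi_1(\mathbb{S}^3 - G)/\langle \mu_i^{n_i}\rangle$, where the $w_i$ are the vertex words; so $\pi_1(\mathbb{S}^3 - G)/\langle \mu_i^{n_i}\rangle$ is a quotient of the finite group $Conj_N(Q_N(G))$, hence finite. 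In either case the conclusion follows.

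The only mild subtlety — which I would mention but not belabor — is keeping the meridians/orientations consistent: Theorem~\ref{T:QtoG} gives finiteness of $Conj_N(Q_N(\Gamma))$ for the $N$-quandle built by adding the relations $x^{y^{n_i}} = x$, and one should note that the resulting group relations $q^{n_i} = 1$ on the Wirtinger generators are precisely the relations $\mu_i^{n_i} = 1$ for meridians $\mu_i$ of the $i$th component/edge, so the group-theoretic quotient appearing in Theorem~\ref{T:QtoG} matches the normal closure $\langle \mu_i^{n_i}\rangle$ in the corollary. This is really just unwinding the definitions recorded in Section~\ref{SS:wirtinger}, so there is no genuine obstacle; the corollary is essentially a restatement of Theorem~\ref{T:QtoG} through the dictionary between conjugation groups and fundamental groups.

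\begin{proof}
Since $Q_N(\Gamma)$ is a finite quandle, Theorem~\ref{T:QtoG} implies that $Conj_N(Q_N(\Gamma))$ is a finite group. If $\Gamma = L$ is a link, then, as noted above, $Conj_N(Q_N(L)) = \pi_1(\mathbb{S}^3 - L)/\langle \mu_i^{n_i}\rangle$, so the latter group is finite. If $\Gamma = G$ is a spatial graph, then $Conj_N(Q_N(G))/\langle w_i\rangle = \pi_1(\mathbb{S}^3 - G)/\langle \mu_i^{n_i}\rangle$, where $\langle w_i\rangle$ is the normal subgroup generated by the vertex words; hence $\pi_1(\mathbb{S}^3 - G)/\langle \mu_i^{n_i}\rangle$ is a quotient of the finite group $Conj_N(Q_N(G))$, and is therefore finite.
\end{proof}
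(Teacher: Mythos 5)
Your proof is correct and follows exactly the route the paper intends: the paper states the corollary immediately after establishing the identities $Conj_N(Q_N(L)) = \pi_1(\mathbb{S}^3 - L)/\langle \mu_i^{n_i}\rangle$ for links and $Conj_N(Q_N(G))/\langle w_i\rangle = \pi_1(\mathbb{S}^3 - G)/\langle \mu_i^{n_i}\rangle$ for spatial graphs, and deduces finiteness from Theorem~\ref{T:QtoG} just as you do. The paper leaves the argument implicit, so your write-up simply makes explicit the same reasoning.
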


In Section \ref{S:cosets} we will prove the converse of Corollary \ref{C:QtoG}.


\subsection{A topological interpretation of the fundamental quandle} \label{SS:topological}

Fenn and Rourke \cite{FR} provided a topological interpretation for the fundamental quandle of a knot, and Hoste and Shanahan \cite{HS2} extended it to $n$-quandles of links. In this section, we review this interpretation, and extend it to first to spatial graphs, and then to $N$-quandles for both links and spatial graphs.

For a link $L$, let $X = \mathbb{S}^3 - N(L)$ be the exterior of the link, and choose a basepoint (denoted $*$) in $X$.  Then $T(L)$ is defined to be the homotopy classes of paths $\alpha: [0,1] \rightarrow X$ such that $\alpha(0) = *$ and $\alpha(1) \in \partial X$.  Moreover, the homotopies must be through sequences of paths with one endpoint at $*$ and the other on $\partial X$. We define quandle operations $\rhd$ and $\rhd^{-1}$ on $T(L)$ by
$$\alpha \rhd^{\pm 1} \beta = \beta m_{\beta}^{\mp 1} \beta^{-1} \alpha$$
where $m_\beta$ is a meridian of $L$ that begins and ends at $\beta(1)$. In other words, $m_\beta$ is a loop in $\partial N(L)$ which is essential in $\partial N(L)$, null-homotopic in $N(L)$, and has linking number 1 with $L$. So the path $\alpha \rhd \beta$ is formed by following $\beta$ from $b$ to $\partial X$, going around the meridian, following $\beta$ back to $*$, and then traversing the path $\alpha$ (see Figure \ref{F:multiply}). Observe that for each component $L_i$ of the link $L$, the paths which have one endpoint on $\partial N(L_i)$ form an algebraic component of the quandle $T(L)$. Fenn and Rourke \cite[Theorem 4.7]{FR} proved that $Q(L)$ and $T(L)$ are isomorphic quandles.

\begin{figure}[h]
$$\scalebox{.8}{\includegraphics{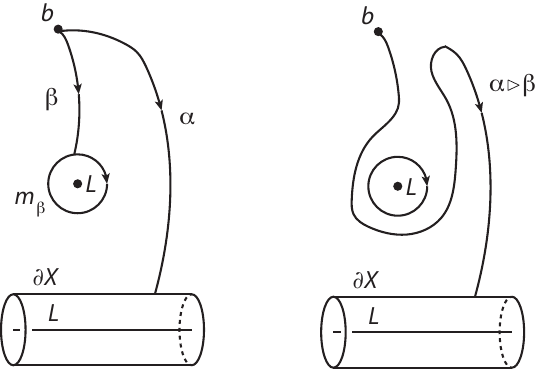}}$$
\caption{Multiplying paths $\alpha$ and $\beta$ in $T(L)$ to form $\alpha \rhd \beta$.}
\label{F:multiply}
\end{figure}

If $G$ is a spatial graph, we can decompose $N(G)$ into a union of balls (centered at each vertex of the graph) and solid cylinders $N(e_i) = D^2 \times [0,1]$ around each edge of the graph.  We choose these so that the portion of the graph inside each ball has a projection with no crossings, and so that the cylinders $N(e_i)$ are all disjoint. Then the meridians of the edge $e_i$ are (homotopic to) the loops $S^1 \times \{t\}$ in $\partial N(e_i)$. We define $T(G)$ in the same way as for links, except that we only consider paths with endpoints in $\partial X \cap \bigcup{\partial N(e_i)}$, and the homotopies are through paths with one endpoint at $*$ and the other in $\partial X \cap \bigcup{\partial N(e_i)}$.  In other words, the endpoints of the paths are allowed to wander around the boundary of the cylinder surrounding each edge, but are not allowed to be on the boundaries of the balls around each vertex, and hence cannot move between edges. Since the endpoint of a path is restricted to a single edge, and the edges have well-defined meridians, we can define the quandle operation for graphs in the same way as for links; the algebraic components of the quandle now correspond to the edges of the graph. 

\begin{theorem} \label{T:T=Qgraph}
For a spatial graph $G$, the quandles $T(G)$ and $Q(G)$ are isomorphic.
\end{theorem}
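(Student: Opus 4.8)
The plan is to mimic the Fenn--Rourke argument \cite{FR} for links, with the new feature being the behavior at vertices. Fix a regular diagram $D$ of $G$ in a plane, place the basepoint $b$ high above the plane, and decompose $N(G)$ into balls $B_v$ around the vertices and cylinders $N(e)$ around the edges, chosen so that the portion of $G$ inside each $B_v$ projects without crossings and the cylinders are disjoint. To each arc of $D$ labeled by a Wirtinger generator $x_i$, associate the homotopy class $\alpha_i \in T(G)$ of the path that runs straight down from $b$ and ends on the side-annulus of the cylinder $N(e)$ just above that arc. Define $\phi$ on the free quandle on the Wirtinger generators by $x_i \mapsto \alpha_i$; by the universal property this gives a quandle map from the free quandle to $T(G)$, and (using Lemma~\ref{leftassoc}) the image of a general element $x_i^w$ is the corresponding product of the $\alpha_j^{\pm 1}$ in $T(G)$.

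First I would check that $\phi$ descends to $Q(G)$, i.e.\ that it sends each defining relation to an identity in $T(G)$. For a crossing relation $x_i = x_k^{x_j}$ this is exactly the computation of Fenn and Rourke: sliding the descending path $\alpha_k$ horizontally to the other side of the over-arc $x_j$ creates a meridian $m_{\alpha_j}$, giving $\alpha_i \simeq \alpha_j m_{\alpha_j}^{\mp 1}\alpha_j^{-1}\alpha_k = \alpha_k \rhd^{\pm 1}\alpha_j$. For a vertex relation $y^{x_1^{\e_1}\cdots x_n^{\e_n}} = y$ at a vertex $v$ it suffices, by axiom A2 and the homomorphism property, to take $y$ a generator; the claim then is that the loop $\ell = m_1^{\e_1} m_2^{\e_2}\cdots m_n^{\e_n}$, where $m_j$ is a meridian of the edge-arc $x_j$ pushed near $v$ and the factors are joined by short arcs along $\partial B_v$, is null-homotopic in $X$. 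But $\ell$ may be isotoped onto $\partial B_v \cong \mathbb{S}^2$ punctured at the $n$ points where the edges leave $B_v$, where it is freely homotopic to the product of small loops around all $n$ punctures taken in the cyclic order determined by $D$ around $v$; in the free group $\pi_1$ of the $n$-punctured sphere this product is trivial, so $\ell \simeq 1$ in $\partial B_v \setminus \{n\text{ pts}\} \subset X$, and the vertex relation holds in $T(G)$. Hence $\phi \colon Q(G) \to T(G)$ is a well-defined quandle homomorphism, and it carries each algebraic component of $Q(G)$ into the component of $T(G)$ indexed by the same edge.

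Next I would show $\phi$ is a bijection. For surjectivity, an arbitrary path representing a class of $T(G)$ can be put in general position with respect to $D$ and the vertical half-planes lying over its arcs and then straightened, exactly as in \cite{FR}; because each vertex ball $B_v$ contains no crossings, a subarc of the path entering $B_v$ can be slid back out to the side-annuli without obstruction, so every class is represented by a descending-then-sliding path and hence lies in the image of $\phi$. For injectivity, if $x_i^w$ and $x_j^{w'}$ have homotopic images, decompose the homotopy into elementary pieces: away from the vertex balls each piece is a Reidemeister-type move and changes the represented quandle element only through the crossing relations and the quandle axioms, while inside a vertex ball $B_v$ a piece corresponds to sliding the path across some of the strands emanating from $v$, which changes the represented element only through the vertex relation at $v$ (again via $\pi_1$ of the punctured sphere). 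All of these are consequences of the presentation of $Q(G)$, so the two words are equal in $Q(G)$; thus $\phi$ is injective, and therefore a quandle isomorphism.

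The main obstacle I expect is injectivity: making precise that a homotopy of paths in $X$, subject to the constraint that the free endpoint stays on the union of the edge-annuli and off the vertex balls, decomposes into moves each realized by the crossing relations, the vertex relations, and the quandle axioms. The part of this taking place inside the vertex balls, where the admissible homotopies are genuinely more constrained than in the link case, is the new point, and it is precisely there that the $n$-punctured-sphere description of $\partial B_v$ does the work. By comparison, the verification that $\phi$ respects the vertex relation is a contained instance of the same computation and should be routine once the punctured-sphere picture is set up carefully, with due attention to basepoints and to the conjugating arcs along $\partial B_v$.
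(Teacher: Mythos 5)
Your proposal follows essentially the same route as the paper: the paper's proof is a short sketch deferring to Fenn--Rourke and noting exactly the two new checks you carry out --- that the map out of $Q(G)$ respects the vertex relations, and that homotopies of paths near a vertex are absorbed by those same relations. Your punctured-sphere description of $\partial B_v$ supplies the detail the paper leaves implicit, and it is correct.
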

\begin{proof}
The proof that $T(G)$ and $Q(G)$ are isomorphic proceeds exactly as it does for links in \cite[Theorem 4.7]{FR}; the only modification is that we need to account for the vertices.  Namely, we need to check:
\begin{enumerate}
	\item In the map from $T(G) \rightarrow Q(G)$, that homotoping a path under a vertex does not change the resulting element of $Q(G)$. This is guaranteed by the vertex relations in $Q(G)$. 
	\item In the map from $Q(G) \rightarrow T(G)$, the paths resulting from an application of a vertex relation are homotopic. This is easily seen by the same approach used for the crossings.
\end{enumerate}

Those familiar with the argument in \cite[Theorem 4.7]{FR} may safely move on; for the convenience of other readers, and for later reference, we will include the details of the proof, including the modifications needed for spatial graphs.

We will define quandle homomorphisms $\lambda: T(G) \rightarrow Q(G)$ and $\mu: Q(G) \rightarrow T(G)$, and show that they are inverses.  

{\bf Definition of $\lambda$:} Suppose that $\gamma \in T(G)$ is a path from a point $p$ on the boundary of $N(a)$, where $a$ is an edge of the graph, to the basepoint $*$.  Consider a projection of graph in which $\gamma$ goes {\em under} arcs $b, c, d, \dots$ as it goes from $p$ to $*$, as shown in Figure~\ref{F:TtoQ}. Then we define $\lambda(\gamma) = a^{b^{\e}c^{\e}d^{\e}\dots}$, where $\e = \pm 1$, depending on whether the crossing with each arc has positive or negative sign.

\begin{figure}[ht]
$$\scalebox{.8}{\includegraphics{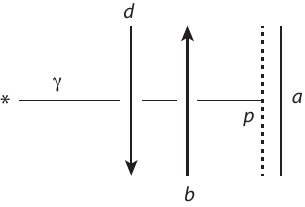}}$$
\caption{$\lambda(\gamma) = a^{b\bar{d}}.$}
\label{F:TtoQ}
\end{figure}

To show $\lambda$ is well-defined, we need to show that if $\gamma \tilde \gamma'$, then $\lambda(\gamma) = \lambda(\gamma')$ in $Q(G)$. There are a few cases to consider.  The first is a homotopy that pushes the endpoint $p$ from an arc $a$ to another arc $c$ in the projection of an edge, as shown in Figure~\ref{F:TtoQ_1}. Since $c = a^b$, $c^{\bar{b}w} = a^{b\bar{b}w} = a^w = \lambda(\gamma)$.

\begin{figure}[ht]
$$\scalebox{.8}{\includegraphics{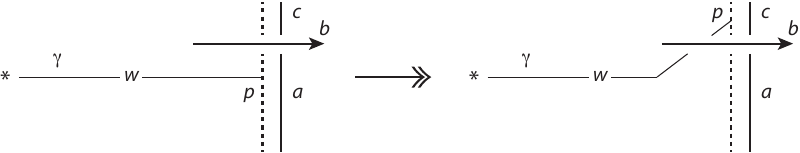}}$$
\caption{Homotopy pushing $p$ to a neighboring arc of the edge.}
\label{F:TtoQ_1}
\end{figure}

The second case is similar to a type II Reidemeister move, shown in Figure~\ref{F:TtoQ_2}, where $\gamma$ is pushed off an arc of $G$. Here $\lambda(\gamma) = a^{w\bar{e}ez} = a^{wz}$.

\begin{figure}[ht]
$$\scalebox{.8}{\includegraphics{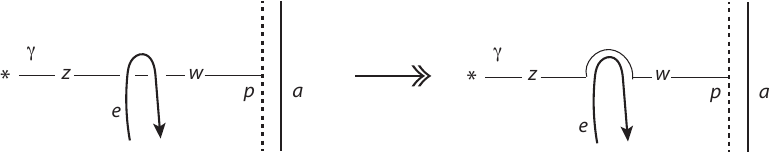}}$$
\caption{Homotopy pushing $\gamma$ off an arc of $G$.}
\label{F:TtoQ_2}
\end{figure}

The third case is when the path $\gamma$ is pushed under a crossing of two edges of $G$, as shown in Figure~\ref{F:TtoQ_3}. In this case $d = c^b$, so $\lambda(\gamma) = a^{wd\bar{b}z} = a^{w\bar{b}cb\bar{b}z} = a^{w\bar{b}cz}$.

\begin{figure}[ht]
$$\scalebox{.8}{\includegraphics{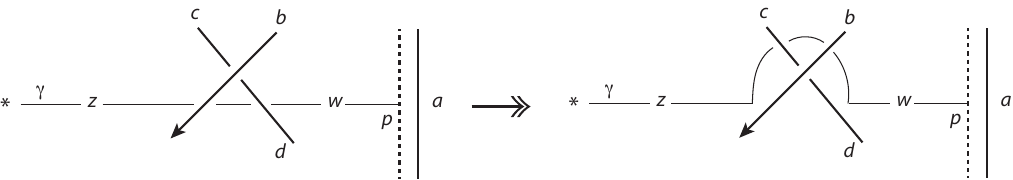}}$$
\caption{Homotopy pushing $\gamma$ across a crossing of $G$.}
\label{F:TtoQ_3}
\end{figure}

Finally, for graphs, we also need to consider homotopies that push $\gamma$ across a vertex of $G$, as shown in Figure~\ref{F:TtoQ_4}. In this case, for any $y \in Q(G)$, $y^{b^\e c^\e d^\e} = y$, as in Figure \ref{relations}.  So $\lambda(\gamma) = a^{wb^\e c^\e d^\e z} = a^{wz}$.  So the map $\lambda$ is well-defined on $T(G)$.  Finally, from the definition of multiplication in $T(G)$, it is clear that if $\lambda(\alpha) = a^w$ and $\lambda(\beta) = b^z$, then $\lambda(\alpha \rhd \beta) = a^{w\bar{z}bz} = (a^w)^{b^z} = \lambda(\alpha) \rhd \lambda(\beta)$, so $\lambda$ is a quandle homomorphism.

\begin{figure}[ht]
$$\scalebox{.8}{\includegraphics{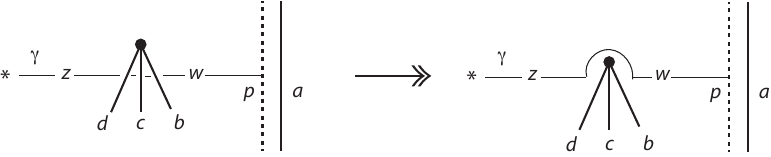}}$$
\caption{Homotopy pushing $\gamma$ across a vertex of $G$.}
\label{F:TtoQ_4}
\end{figure}

{\bf Definition of $\mu$:} Given a projection of the graph $G$, and an arc $a$ in the projection, we first define $\mu(a)$ as a path from the basepoint $*$ to a point on the boundary of $N(a)$ that goes {\em over} any other arc in the graph.  Given another arc $c$, we extend this to define $\mu(a^c)$ as the result of composing $\mu(a)$ with a path that goes from $*$ to a point on $N(c)$ (passing over any other arcs along the way), loops around $c$ once in the positive direction (negative for $a^{\bar{c}}$), and then returns to $*$ along the same path, as shown in Figure~\ref{F:QtoT}.

\begin{figure}[ht]
$$\scalebox{.8}{\includegraphics{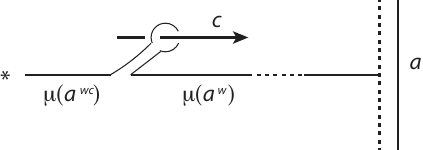}}$$
\caption{The path $\mu(a^c)$}
\label{F:QtoT}
\end{figure}

As Fenn and Rourke \cite{FR} observe, to show $\mu$ is well-defined it suffices to consider two types of substitutions.  

{\em Primary substitution:} replace $c^w$ by $a^{bw}$ when $c = a^b$. This is illustrated in Figure~\ref{F:QtoT_1}; the two paths are clearly homotopic by moving the endpoint $p$ under arc $b$.

\begin{figure}[ht]
$$\scalebox{.8}{\includegraphics{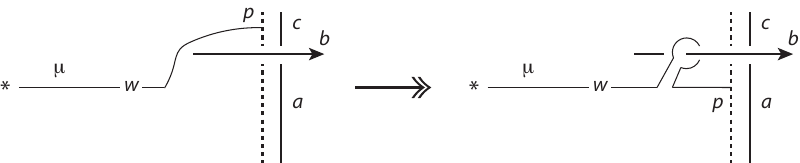}}$$
\caption{When $c = a^b$, $\mu(c^w) = \mu(a^{bw})$}
\label{F:QtoT_1}
\end{figure}

{\em Secondary substitution:} replace $x^{wz}$ by $x^{wvz}$, where $x^v = x$ for all $x$ in $Q(G)$.  In $Q(G)$, these relations are generated by crossing relations (if $c = a^b$, then $x^{\bar{b}abc} = x$ for all $x$) and by vertex relations. Figure~\ref{F:QtoT_2} shows that if $d = c^b$, then $\mu(a^{wz}) = \mu(a^{w\bar{b}cbdz})$. In this case the homotopy consists of pulling the strands of the path over the arcs of the crossing to get a loop that goes entirely under the crossing, and then contracting this loop back to the original path.

\begin{figure}[ht]
$$\scalebox{.8}{\includegraphics{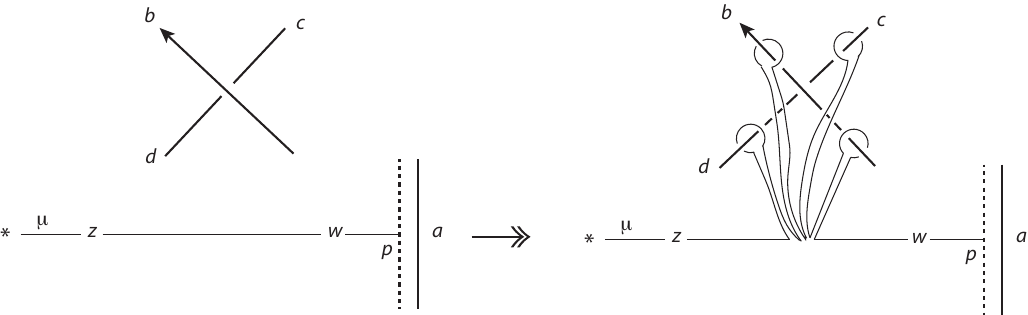}}$$
\caption{When $d = c^b$, $\mu(a^{wz}) = \mu(a^{w\bar{b}cbdz})$.}
\label{F:QtoT_2}
\end{figure}

Figure~\ref{F:QtoT_3} shows that at the vertex with relation $y^{b\bar{c}d} = y$, we have $\mu(a^{wz}) = \mu(a^{wb\bar{c}dz})$, by a homotopy similar to the one used for a crossing.

\begin{figure}[ht]
$$\scalebox{.8}{\includegraphics{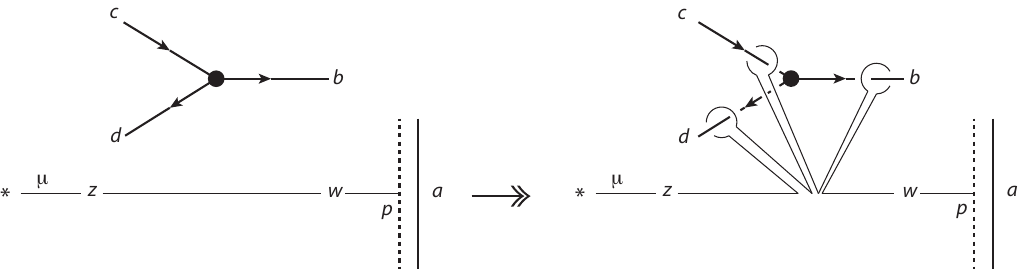}}$$
\caption{The path $\mu(a^c)$}
\label{F:QtoT_3}
\end{figure}

Hence, $\mu$ is well-defined, and it is immediate that $\mu(a^c) = \mu(a) \rhd \mu(c)$, so it is also a quandle homomorphism.

Finally, we observe that $\lambda \circ \mu$ is the identity on $Q(G)$, and $\mu \circ \lambda$ is the identity on $T(G)$ (since each undercrossing of the path with an arc of the graph can be homotoped to a loop around the arc, with a ``feeler" back to the basepoint $*$).  This completes the proof that $T(G)$ and $Q(G)$ are isomorphic quandles.
\end{proof}

To extend our topological interpretation to $N$-quandles, we generalize the $n$-meridian moves introduced in \cite{HS2}.

\begin{definition} \label{D:n-meridian}
Suppose $\Gamma$ is a link (resp. spatial graph) with $k$ components (resp. edges), and $N = (n_1, \dots, n_k)$. Let $c_i$ represent the $i$th component (resp. edge), and $m_i$ be a meridian of $c_i$. Suppose $\alpha$ is a path in $X$ with $\alpha(0) = *$ and $\alpha(1) \in \{*\} \cup \left(\bigcup_i{\partial N(c_i)}\right)$. Suppose further there is a $t_0 \in [0,1]$ such that $\alpha(t_0) \in \partial N(c_j)$.  Let $\sigma_1(t) = \alpha(tt_0)$ and $\sigma_2(t) = \alpha((1-t)t_0 + t)$, so $\alpha = \sigma_1\sigma_2$.  Then we say the path $\sigma_1m_j^{\pm n_j} \sigma_2$ is obtained from $\alpha$ by a $\pm N$-{\em meridian move}, as shown in Figure~\ref{F:N_meridian}.  Two paths are {\em N-meridionally equivalent} if they are related by a sequence of $\pm N$-meridian moves and homotopies.
\end{definition}

\begin{figure}[ht]
$$\scalebox{.8}{\includegraphics{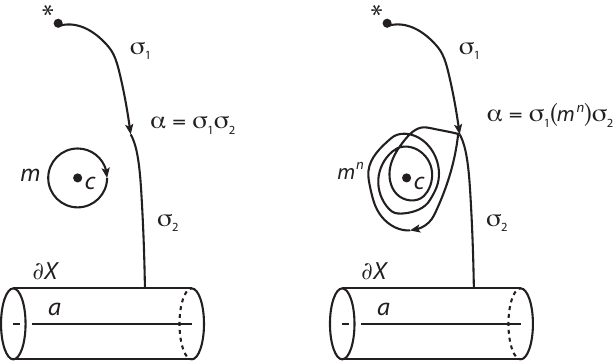}}$$
\caption{An $N$-meridian move. Here $m$ is the meridian for arc $c$ with label $n$.}
\label{F:N_meridian}
\end{figure}

We now define the $N$-quandle $T_N(\Gamma)$ as the set of $N$-meridional equivalence classes of paths in $T(\Gamma)$, with the same quandle operations as defined for $T(\Gamma)$. As before, the algebraic components of $T_N(\Gamma)$ are the sets of paths which end on the same $\partial N(c_i)$.

\begin{theorem} \label{T:T_N=Q_N}
The $N$-quandles $T_N(\Gamma)$ and $Q_N(\Gamma)$ are isomorphic.
\end{theorem}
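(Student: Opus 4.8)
The plan is to promote the isomorphism $Q(\Gamma)\cong T(\Gamma)$ to the two quotients. Let $\phi\colon Q(\Gamma)\to T(\Gamma)$ be the isomorphism furnished by Theorem~\ref{T:T=Qgraph} when $\Gamma$ is a spatial graph and by Fenn--Rourke \cite{FR} when $\Gamma$ is a link; recall it sends each Wirtinger generator $x$ to the homotopy class of the straight path $e_x$ from $b$ to a point of $\partial N(c_i)$ just beside the arc $x$. By construction $Q_N(\Gamma)$ is the quotient of $Q(\Gamma)$ by the quandle congruence generated by the relations $x^{y^{n_i}}=x$ for distinct generators $x,y$ with $y$ on the component/edge $c_i$, and $T_N(\Gamma)$ is the quotient of $T(\Gamma)$ by $N$-meridional equivalence, which is generated by homotopies and single $\pm N$-meridian moves. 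So it is enough to check that $\phi$ and $\phi^{-1}$ each carry the generating relations of the source quotient into relations that already hold in the target quotient; the induced maps are then inverse to one another by functoriality.

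First I would show $\phi$ descends to a surjection $\bar\phi\colon Q_N(\Gamma)\to T_N(\Gamma)$. Applying the formula $\alpha\rhd\beta=\beta m_\beta^{-1}\beta^{-1}\alpha$ a total of $n_i$ times, one computes on the path side
$$\phi\bigl(x^{y^{n_i}}\bigr)=e_y\,m_{e_y}^{-n_i}\,e_y^{-1}\,e_x,$$
where $m_{e_y}$ is a meridian of $c_i$ based at $e_y(1)$. Writing $\sigma_1=e_y$ and $\sigma_2=e_y^{-1}e_x$, the product $\sigma_1\sigma_2$ is homotopic to $e_x$ and passes through $\sigma_1(1)=e_y(1)\in\partial N(c_i)$, so $e_y\,m_{e_y}^{-n_i}\,e_y^{-1}\,e_x=\sigma_1 m_i^{-n_i}\sigma_2$ is obtained from $\sigma_1\sigma_2$ by a single $-N$-meridian move. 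Hence $\phi(x^{y^{n_i}})$ and $\phi(x)$ have the same class in $T_N(\Gamma)$, so $\phi$ kills every defining relation of $Q_N(\Gamma)$ and factors through $\bar\phi$; surjectivity is inherited from $\phi$ together with the projection $T(\Gamma)\to T_N(\Gamma)$.

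Next I would run the mirror argument with $\psi=\phi^{-1}$. Suppose $\alpha'$ is obtained from $\alpha=\sigma_1\sigma_2$ by a $\pm N$-meridian move across $c_j$, so that $\alpha'=\bigl(\sigma_1 m_j^{\pm n_j}\sigma_1^{-1}\bigr)\,\alpha$ and the based loop $\sigma_1 m_j^{\pm1}\sigma_1^{-1}$ is a conjugate of a meridian of $c_j$. The key point is that, by the same identity $\alpha\rhd\beta=\beta m_\beta^{-1}\beta^{-1}\alpha$ (now with $\sigma_1$ viewed as a path ending on $\partial N(c_j)$), prepending such a based meridian loop to a path amounts to acting on the corresponding quandle element, via $\rhd$ or $\rhd^{-1}$, by the element $\nu=\psi(\sigma_1)$, which lies in the $j$th component of $Q(\Gamma)$. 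Iterating $n_j$ times, $\psi(\alpha')=\psi(\alpha)^{\nu^{\pm n_j}}$. Since imposing the relations $x^{y^{n_i}}=x$ on generators already makes $Q_N(\Gamma)$ an honest $N$-quandle, the $N$-quandle relation together with axiom \textbf{A2} gives $q^{\nu^{\pm n_j}}=q$ in $Q_N(\Gamma)$ for every $q$; hence $\psi(\alpha')=\psi(\alpha)$ there. Thus $\psi$ descends to $\bar\psi\colon T_N(\Gamma)\to Q_N(\Gamma)$, and since $\bar\phi$ and $\bar\psi$ are induced by mutually inverse maps they are mutually inverse isomorphisms.

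The difficulty here is bookkeeping rather than a new idea; the step I expect to require the most care is the precise matching, inside the isomorphism $Q(\Gamma)\cong T(\Gamma)$, of ``prepend a based meridian loop of $c_j$'' with ``act by an element of the $j$th component.'' For links this is essentially built into Fenn--Rourke's proof of $Q(L)\cong T(L)$, and for spatial graphs it involves only the edge meridians, so the constraint that path endpoints cannot cross a vertex (part of the definition of $T(G)$, and matched by the fact that the $N$-quandle relations are indexed by edges) causes no interference---with Theorem~\ref{T:T=Qgraph} supplying the base case. One should also state explicitly that it suffices to check the two quotient relations on generators and on single $\pm N$-meridian moves, which is exactly what the two displayed computations do.
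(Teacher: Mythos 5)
Your proposal is correct and follows the same route as the paper: the paper's proof is a one-sentence assertion that the $\pm N$-meridian moves in $T_N(\Gamma)$ correspond exactly to the added relations $x^{y^{n_i}}=x$ in $Q_N(\Gamma)$, deferring the mechanics to Fenn--Rourke, and your two displayed computations (the identity $\phi(x^{y^{n_i}})=e_y m_{e_y}^{-n_i}e_y^{-1}e_x$ realized as a single $-N$-meridian move on a path homotopic to $e_x$, and conversely a meridian move read as acting by $\nu^{\pm n_j}$ with $\nu$ in the $j$th component) are precisely the details that assertion suppresses. The writeup is a faithful, correctly signed expansion of the paper's argument.
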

\begin{proof}
Again, this closely follows the proof in \cite{FR}.  The addition of $N$-meridional equivalence among the paths in $T_N(\Gamma)$, as shown in Figure \ref{F:N_meridian} exactly corresponds to the addition of the relations $x^{y^{n_i}} = x$ in $Q_N(\Gamma)$ (where $y$ is in the $i$th algebraic component of $Q_N(\Gamma)$). To be precise, consider the maps $\lambda$ and $\mu$ from Theorem \ref{T:T=Qgraph}.  If $\lambda(\alpha) = \lambda(\sigma_1\sigma_2) = a^{wz}$, then $\lambda(\sigma_1m_j^{n_j} \sigma_2) = a^{wc_j^{n_j}z} = a^{wz}$ in $Q_N(\Gamma)$, so $\lambda$ is still well-defined on $T_N(\Gamma)$.  Conversely, $\mu(a^{wc_j^{n_j}z}) = \sigma_1m_j^{n_j} \sigma_2 = \sigma_1\sigma_2$ in $T_N(\Gamma)$, so $\mu$ is also well-defined on $Q_N(\Gamma)$.  The rest of the argument in Theorem \ref{T:T=Qgraph} is the same, proving $T_N(\Gamma)$ and $Q_N(\Gamma)$ are isomorphic.
\end{proof}

\section{Relating $Q_N(\Gamma)$ to cosets in $\pi_1(\mathbb{S}^3 - \Gamma)/\langle \mu_i^{n_i} \rangle$} \label{S:cosets}

In \cite{JO}, Joyce defined a quandle structure on the set of cosets of the peripheral subgroup of the fundamental group of a knot $K$, and proved the resulting quandle is isomorphic to the knot quandle. This was a key part of his proof that the knot quandle classifies unoriented tame knots. Hoste and Shanahan \cite{HS2} extended this to the fundamental $n$-quandle of a link. Our goal in this section is to further extend to the fundamental $N$-quandles of links and spatial graphs.

Suppose $\Gamma$ is a link (resp. spatial graph) with $k$ components (resp. edges), and $N = (n_1, \dots, n_k)$. Let $\mu_i$ be a meridian for the $i$th component (resp. edge) in $\pi_1(\mathbb{S}^3-\Gamma)$. For convenience, let $\pi_1^N(\Gamma) = \pi_1(\mathbb{S}^3 - \Gamma)/\langle \mu_i^{n_i} \rangle$. We will define {\em peripheral subgroups} $P_i$ (for $1 \leq i \leq k$) as follows.  If $\Gamma$ is a link, and $c_i$ is the $i$th component, let $\lambda_i$ be a longitude for $c_i$, and define $P_i$ as the subgroup of $\pi_1^N(\Gamma)$ generated by $\mu_i$ and $\lambda_i$. If $\Gamma$ is a graph, and $c_i$ is the $i$th edge, then $P_i$ is just the (cyclic) subgroup of $\pi_1^N(\Gamma)$ generated by $\mu_i$. Note that in both cases, $P_i$ is an abelian group (since the meridian and longitude of a torus commute).

We denote the set of cosets of $P_i$ in $\pi_1^N(\Gamma)$ by $P_i\backslash \pi_1^N(\Gamma)$.  We define a quandle operation on the cosets by:
$$P_i g \rhd^{\pm 1} P_i h = P_i gh^{-1}\mu_i^{\pm 1} h$$
To see this operation is well-defined, suppose $P_i g = P_i r$ and $P_i h = P_i s$. So there are $p, q \in P_i$ such that $pg = r$ and $qh = s$.  Then
$$P_i r \rhd P_i s = P_i rs^{-1}\mu_i s = P_i pgh^{-1}q^{-1}\mu_i qh = P_i gh^{-1}(q^{-1}\mu_i q) h$$
Since $P_i$ is abelian, $q^{-1}\mu_i q = q^{-1}q\mu_i = \mu_i$, so $P_i r \rhd P_i s = P_i g \rhd P_i h$.  Hence the operation is well-defined, and it is straightforward to check that it satisfies the quandle axioms. We denote this quandle by $(P_i\backslash \pi_1^N(\Gamma); \mu_i)$. The following theorem was proved by Hoste and Shanahan \cite{HS2} for the fundamental $n$-quandle of a link; however, since the proof was done for each algebraic component, we can simply replace $n$ with $n_i$ to extend it to the fundamental $N$-quandle.

\begin{theorem} \label{T:GtoQ} \cite{HS2}
If $L = \{K_1, \dots, K_k\}$ is a link in $\mathbb{S}^3$, $N = (n_1, \dots, n_k)$ is a $k$-tuple of positive integers, and $P_i$ is the subgroup of $\pi_1^N(L)$ generated by a meridian $\mu_i$ and longitude $\lambda_i$ of $K_i$, then the quandle $(P_i\backslash \pi_1^N(L); \mu_i)$ is isomorphic to the $i$th algebraic component $Q_N^i(L)$ of $Q_N(L)$.
\end{theorem}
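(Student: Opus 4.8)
The plan is to pass to the topological model. By Theorem~\ref{T:T_N=Q_N} the fundamental $N$-quandle $Q_N(L)$ is isomorphic to $T_N(L)$, and under this isomorphism the $i$th algebraic component $Q_N^i(L)$ corresponds to $T_N^i(L)$, the set of $N$-meridionally equivalent classes of paths in $X=\mathbb{S}^3-N(L)$ running from the basepoint $b$ to $\partial N(K_i)$. So it suffices to build an isomorphism $\Phi\colon T_N^i(L)\to(P_i\backslash\pi_1^N(L);\mu_i)$. I would first produce a bijection $T^i(L)\to P_i\backslash\pi_1(\mathbb{S}^3-L)$ between the \emph{unquotiented} objects, check it is a quandle isomorphism, and then verify that the two evident quotients --- by $N$-meridian moves on one side, by the normal closure of the $\mu_j^{n_j}$ on the other --- match up.

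To set up the map, fix a point $*_i\in\partial N(K_i)$, a path $\gamma_i$ in $X$ from $b$ to $*_i$, and choose the meridian $\mu_i$ to be the class of $\gamma_i\, m\,\gamma_i^{-1}$, where $m$ is the meridian loop of $K_i$ based at $*_i$. Given a path $\alpha$ ending on $\partial N(K_i)$, choose an arc $\eta$ in $\partial N(K_i)$ from $*_i$ to $\alpha(1)$ and put $\Phi(\alpha)=P_i\cdot[\gamma_i\,\eta\,\alpha^{-1}]$, the class of the loop at $b$ obtained by reading the three paths in the order written (matching the product convention of \cite{FR}). Replacing $\eta$ by another such arc multiplies this class on the left by an element of the image of $\pi_1(\partial N(K_i))$, which is precisely $P_i$; a homotopy of $\alpha$ (with its far endpoint free to slide on $\partial N(K_i)$) does not change the coset; and an $N$-meridian move along some $\partial N(K_j)$ multiplies the class on the left by a conjugate of $\mu_j^{\pm n_j}$, which is trivial in $\pi_1^N(L)$. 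Hence $\Phi$ descends to a well-defined map $T_N^i(L)\to P_i\backslash\pi_1^N(L)$.

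The remaining work is to show $\Phi$ is a bijective quandle homomorphism. For the homomorphism property I would reverse the defining formula $\alpha\rhd^{\pm1}\beta=\beta\,m_\beta^{\mp1}\,\beta^{-1}\,\alpha$, use the same connecting arc for $\alpha$ and for $\alpha\rhd^{\pm1}\beta$, and compute that $\Phi(\alpha\rhd^{\pm1}\beta)=P_i\,g\,h^{-1}\mu_i^{\pm1}h$, where $\Phi(\alpha)=P_i g$ and $\Phi(\beta)=P_i h$; the only nontrivial ingredient is that the meridian $m_\beta$, dragged along the torus to $*_i$ and conjugated by $\gamma_i$, represents $\mu_i$ (the meridian class on a torus is unique, and $\mu_i$ was chosen accordingly), so this matches the defining operation $P_i g\rhd^{\pm1}P_i h=P_i g h^{-1}\mu_i^{\pm1}h$. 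Surjectivity is immediate: the coset $P_i g$ is the image of the path $\ell^{-1}\gamma_i$ for any loop $\ell$ at $b$ representing a lift of $g$. Injectivity is where the content lies: if $\Phi(\alpha)=\Phi(\beta)$, then lifting to $\pi_1(\mathbb{S}^3-L)$ the corresponding loops differ by a factor in $P_i$ and a factor in the normal closure of the $\mu_j^{n_j}$; the first factor is absorbed by letting the endpoints of the paths wander around $\partial N(K_i)$ (this is exactly why one passes to cosets of $P_i$), while each conjugate of $\mu_j^{\pm n_j}$ in the second factor is realized, after a ``finger move'' homotopy pushing the path out to touch $\partial N(K_j)$, by a single $N$-meridian move. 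Hence $\alpha$ and $\beta$ become $N$-meridionally equivalent.

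I expect this last step to be the main obstacle: one must pin down that left multiplication on the $\pi_1$ side by a conjugate of $\mu_j^{\pm n_j}$ corresponds, up to homotopy, to an $N$-meridian move, and that the unavoidable peripheral ambiguity in the construction is accounted for exactly by quotienting to cosets of $P_i$; it is the simultaneous bookkeeping of these two quotients, together with keeping all path- and loop-composition conventions consistent, that makes the argument delicate. As the excerpt notes, Hoste and Shanahan prove the $n$-quandle case in \cite{HS2}, and because every step above is confined to the data of the single component $K_i$ (its boundary torus, its meridian, its peripheral subgroup), one may substitute $n_i$ for $n$ throughout; alternatively the whole argument can be carried out combinatorially on Wirtinger presentations in the style of \cite{JO}, by writing down explicit mutually inverse maps on generators.
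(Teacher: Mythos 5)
Your argument is correct and is essentially the approach the paper relies on: for this theorem the paper simply cites \cite{HS2} (noting the proof is carried out component by component, so $n$ may be replaced by $n_i$), and the detailed version it does write out for the graph analogue (Theorem \ref{T:GtoQgraph}) is exactly your construction with the map run in the opposite direction --- $\tau(P_i\alpha)=\alpha^{-1}\nu$ is the inverse of your $\Phi$. Your four checks (well-definedness via the peripheral subgroup and the triviality in $\pi_1^N(L)$ of conjugates of $\mu_j^{\pm n_j}$, surjectivity, injectivity via endpoint-tracking and realizing group-level relators by $N$-meridian moves, and the homomorphism computation) correspond step for step to the checks in that proof.
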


We will prove the corresponding theorem for spatial graphs.

\begin{theorem} \label{T:GtoQgraph}
If $G$ is a spatial graph with $k$ edges $e_1, \dots, e_k$, $N = (n_1, \dots, n_k)$ is a $k$-tuple of positive integers, and $P_i$ is the subgroup of $\pi_1^N(G)$ generated by a meridian $\mu_i$ of $e_i$, then the quandle $(P_i\backslash \pi_1^N(G); \mu_i)$ is isomorphic to the $i$th algebraic component $Q_N^i(G)$ of $Q_N(G)$. Similarly, if $P_i$ is the subgroup of $\pi_1(\mathbb{S}^3-G)$ generated by $\mu_i$, then the quandle $(P_i\backslash \pi_1(\mathbb{S}^3-G); \mu_i)$ is isomorphic to the $i$th algebraic component $Q^i(G)$ of $Q(G)$.
\end{theorem}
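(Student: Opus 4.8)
The plan is to pass to the topological model of Section~\ref{SS:topological}. By Theorem~\ref{T:T=Qgraph} we have $Q(G)\cong T(G)$, and this isomorphism carries the $i$th algebraic component $Q^i(G)$ to $T^i(G)$, the set of homotopy classes of paths from $b$ ending on the annulus $\partial N(e_i)$; likewise Theorem~\ref{T:T_N=Q_N} identifies $Q^i_N(G)$ with $T^i_N(G)$, the set of $N$-meridional equivalence classes of such paths. So it suffices to construct quandle isomorphisms $T^i_N(G)\cong (P_i\backslash\pi_1^N(G);\mu_i)$ and $T^i(G)\cong (P_i\backslash\pi_1(\mathbb{S}^3-G);\mu_i)$; I will describe the first, the second being the same argument with every reference to $N$-meridian moves deleted. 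Fix a point $p_i\in\partial N(e_i)$ and a path $\gamma_i$ in $X$ from $b$ to $p_i$, and let $\mu_i$ also denote the based meridian obtained from a meridian loop at $p_i$ by conjugating along $\gamma_i$, as well as its image in $\pi_1^N(G)$. Define $\Phi_i\colon T^i_N(G)\to P_i\backslash\pi_1^N(G)$ by $\Phi_i([\alpha])=P_i[\gamma_i\,\delta\,\alpha^{-1}]$, where $\delta$ is any arc in $\partial N(e_i)$ from $p_i$ to $\alpha(1)$ and $[\gamma_i\,\delta\,\alpha^{-1}]$ denotes the class in $\pi_1^N(G)$ of the resulting loop at $b$.

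Next I would verify the basic properties of $\Phi_i$. Well-definedness: changing $\delta$ alters $[\gamma_i\,\delta\,\alpha^{-1}]$ by a left multiple of $\mu_i$ (since $\pi_1$ of the annulus $\partial N(e_i)$ is generated by the meridian), hence fixes the coset; a homotopy of $\alpha$ through paths ending on $\partial N(e_i)$ is absorbed by correspondingly modifying $\delta$; and a $\pm N$-meridian move (Definition~\ref{D:n-meridian}) inserts a conjugate of some $\mu_j^{\pm n_j}$ into the loop, which is trivial in $\pi_1^N(G)$. Homomorphism: using $\alpha\rhd^{\pm1}\beta=\beta m_\beta^{\mp1}\beta^{-1}\alpha$, closing up $\alpha\rhd^{\pm1}\beta$ produces the loop $(\gamma_i\,\delta_\alpha\,\alpha^{-1})(\beta\,m_\beta^{\pm1}\,\beta^{-1})$, and one computes $[\beta\,m_\beta^{\pm1}\,\beta^{-1}]=h^{-1}\mu_i^{\pm1}h$ when $P_ih=\Phi_i([\beta])$, which is precisely the coset operation $P_ig\rhd^{\pm1}P_ih=P_igh^{-1}\mu_i^{\pm1}h$. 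Surjectivity: given $g\in\pi_1^N(G)$ represented by a loop $\ell$ at $b$, the path $\ell^{-1}\gamma_i$ ends at $p_i$ and satisfies $\Phi_i([\ell^{-1}\gamma_i])=P_ig$.

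The substance is injectivity. Suppose $\Phi_i([\alpha])=\Phi_i([\alpha'])$; lifting to $\pi_1(\mathbb{S}^3-G)$ this reads $[\gamma_i\,\delta_\alpha\,\alpha^{-1}]=\mu_i^{r}\,k\,[\gamma_i\,\delta_{\alpha'}\,\alpha'^{-1}]$ for some $r\in\mathbb{Z}$ and some $k$ in the normal closure of $\{\mu_j^{n_j}\}$. Rearranging in the fundamental groupoid shows that $\alpha$ is homotopic rel endpoints to $\alpha'$ followed by a loop $c$ (a product of conjugates of the $\mu_j^{\pm n_j}$, based at $\alpha'(1)$) and then by an arc $\rho$ in the annulus $\partial N(e_i)$. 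Appending $\rho$ does not change the class in $T(G)$ (it is an endpoint-slide homotopy), and appending $c$ does not change the class in $T_N(G)$: for each factor $g_l\mu_{j_l}^{\pm n_{j_l}}g_l^{-1}$ of $c$ we first homotope the path so that near that point it makes a small detour touching $\partial N(e_{j_l})$ — possible since $X$ is connected — and then perform an $N$-meridian move there, choosing the detour so as to realize precisely that factor. Hence $[\alpha]=[\alpha']$ in $T^i_N(G)$. (Equivalently, one may package all of this by exhibiting the inverse $\Psi_i(P_ig)=[\ell^{-1}\gamma_i]$ and checking its well-definedness via the same two observations.) Since a bijective quandle homomorphism is a quandle isomorphism, $\Phi_i$ is the desired isomorphism; composing with the isomorphisms of Theorems~\ref{T:T_N=Q_N} and \ref{T:T=Qgraph} proves the first assertion, and the second follows verbatim once one omits all mention of $N$-meridian moves and of the normal closure of $\{\mu_j^{n_j}\}$.

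I expect the last step — matching a product of conjugates of $\mu_j^{n_j}$ (a purely group-theoretic object) with a sequence of $N$-meridian moves (a geometric one) — to be the main obstacle, since a single $N$-meridian move inserts meridian powers only where the path already meets $\partial N(e_j)$, so realizing an arbitrary conjugate genuinely requires interleaving homotopies that push the path out to the appropriate edge neighborhood. A secondary nuisance is keeping careful track of meridian orientations and of left versus right cosets, so that the homomorphism computation lands on $P_igh^{-1}\mu_i^{\pm1}h$ and not its mirror image. Notably the vertices of $G$ cause no extra difficulty in this argument: they have already been accounted for in $Q(G)$ through Theorem~\ref{T:T=Qgraph}, and since the paths defining $T^i$ are forbidden from crossing the vertex balls, each algebraic component sees only its own edge together with the ambient group, exactly as in the link case treated in \cite{HS2} and Theorem~\ref{T:GtoQ}.
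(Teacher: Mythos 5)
Your proof is correct and follows essentially the same route as the paper's: both pass to the topological path quandle $T_N(G)$ and set up a concatenation-with-a-reference-path bijection with the cosets of $P_i$, yours going from paths to cosets and the paper's from cosets to paths (your $\Phi_i$ is the inverse of the paper's $\tau(P_1\alpha)=\alpha^{-1}\nu$). The point you flag as the main obstacle in your injectivity step --- realizing an arbitrary product of conjugates of the $\mu_j^{\pm n_j}$ by homotopies interleaved with $N$-meridian moves --- is exactly the step the paper leaves largely implicit in its well-definedness and injectivity checks, and you handle it correctly.
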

\begin{proof}
Without loss of generality, we will consider the subgroup $P_1$ generated by a meridian $\mu_1$ of the edge $e_1$. We begin by fixing an element $\nu \in Q_N^1(G)$ which is a path in $X = \mathbb{S}^3 - N(G)$ from the basepoint $b$ to a point in $\partial N(e_1)$.  Then $m_\nu$ is the meridian in $\partial N(e_1)$ that starts and ends at $\nu(1)$. So we let $\mu_1 = \nu m_\nu \nu^{-1} \in \pi_1^N(G)$, and $P_1$ is the subgroup generated by $\mu_1$. We define a map $\tau: (P_i\backslash \pi_1^N(G); \mu_i) \rightarrow Q_N(G)$ by $\tau(P_1 \alpha) = \alpha^{-1}\nu$. We need to show that $\tau$ is (1) well-defined, (2) onto $Q_N^1(G)$, (3) injective and (4) a quandle homomorphism.
\medskip

\noindent {\bf Well-defined.} Suppose $P_1\alpha = P_1\beta$, so $\beta = \mu_1^j \alpha$ for some $j$. Then 
$$\tau(P_i \beta) = \beta^{-1}\nu = \alpha^{-1} \mu_1^{-j} \nu = \alpha^{-1} (\nu m_\nu \nu^{-1})^{-j} \nu = \alpha^{-1} \nu m_\nu^{-j} \nu^{-1} \nu =  \alpha^{-1} \nu m_\nu^{-j}.$$
But since the endpoint of the path can move around $\partial N(e_1)$, $\nu m_\nu^{-j} \sim \nu$, so $\tau(P_i \beta) \sim \alpha^{-1} \nu = \tau(P_1 \alpha)$.  Hence $\tau$ is well-defined.
\medskip

\noindent {\bf Onto $Q_N^1(G)$.} For any $\alpha \in \pi_1^N(G)$, the endpoint $\alpha^{-1} \nu (1) = \nu(1) \in \partial N(e_1)$, so the image of $\tau$ is a subset of $Q_N^1(G)$. To show $\tau$ is onto $Q_N^1(G)$, consider $\sigma \in Q_N^1(G)$.  Then $\alpha = \nu \sigma^{-1} \in \pi_1^N(G)$, and $\tau(P_1 \alpha) = \sigma \nu^{-1} \nu = \sigma$. So the image of $\tau$ is equal to $Q_N^1(G)$.
\medskip

\noindent {\bf Injective.} Suppose $\tau(P_1\alpha) = \tau(P_1\beta)$.  Then $\alpha^{-1}\nu = \beta^{-1}\nu$, so $\alpha\beta^{-1}\nu = \nu$. In other words, there is a sequence of homotopies and $N$-meridian moves which transforms $\alpha\beta^{-1}\nu$ into $\nu$. During these homotopies, the endpoint of the path on $\partial N(e_1)$ traces out a loop from $\nu(1)$ back to $\nu(1)$, which is homotopic to $m_\nu^j$ for some $j$.  This means that, fixing both endpoints, we have $\alpha\beta^{-1}\nu m_\nu^j \sim \nu$.  Hence the loop $\alpha\beta^{-1}\nu m_\nu^j \nu^{-1}$ is trivial in $\pi_1^N(G)$.  This loop is the same as $\alpha\beta^{-1} \mu_1^j$, which means $\alpha\beta^{-1} = \mu_1^{-j} \in P_1$.  Hence $P_1 \alpha = P_1 \mu_1^{-j} \beta = P_1 \beta$, so $\tau$ is injective.
\medskip

\noindent {\bf Quandle homomorphism.} Consider $\alpha, \beta \in \pi_1^N(G)$.
\begin{align*}
\tau(P_1 \alpha \rhd P_1 \beta) &= \tau(P_1 \alpha \beta^{-1} \mu_1 \beta) \\
&= \beta^{-1} \mu_1^{-1} \beta \alpha^{-1} \nu \\
&= \beta^{-1} \nu m_\nu^{-1} \nu^{-1} \beta \alpha^{-1} \nu \\
&= (\beta^{-1} \nu) m_\nu^{-1} (\beta^{-1} \nu)^{-1} (\alpha^{-1} \nu) \\
&= (\alpha^{-1} \nu) \rhd (\beta^{-1} \nu) \\
&= \tau(\alpha) \rhd \tau(\beta)
\end{align*}

Therefore, $\tau$ is a quandle isomorphism between $(P_1\backslash \pi_1^N(G); \mu_1)$ and $Q_N^1(G)$. The same argument can be used for any $i$, $1 \leq i \leq k$. If we leave out the $N$-meridian moves, then the same proof shows $(P_i\backslash \pi_1(\mathbb{S}^3-G); \mu_i)$ is isomorphic to the $i$th algebraic component $Q^i(G)$ of $Q(G)$.
\end{proof}

If the group $\pi_1^N(\Gamma)$ is finite, then so is the set of cosets of $P_i$ for each $i$; hence the algebraic components of $Q_N(\Gamma)$ are finite as well.  Combining Theorems \ref{T:GtoQ} and \ref{T:GtoQgraph} with Corollary \ref{C:QtoG}, we conclude:

\begin{theorem} \label{T:QiffG}
For a link or spatial graph $\Gamma$, the fundamental $N$-quandle $Q_N(\Gamma)$ is finite if and only if the group $\pi_1^N(\Gamma)$ is finite.
\end{theorem}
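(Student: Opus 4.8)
The plan is to assemble Theorem~\ref{T:QiffG} from the pieces already developed in the excerpt, treating it as a synthesis rather than a fresh argument. The statement is an ``if and only if,'' so I would prove the two implications separately, and in each direction I would handle links and spatial graphs in parallel, since the only structural difference is that for graphs the peripheral subgroups $P_i$ are cyclic (generated by $\mu_i$ alone) rather than rank-two (generated by $\mu_i$ and $\lambda_i$).

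For the forward direction, assume $Q_N(\Gamma)$ is finite. Then each algebraic component $Q_N^i(\Gamma)$ is finite, so by Theorem~\ref{T:QtoG} the conjugation group $Conj_N(Q_N(\Gamma))$ is finite (with the explicit bound $n_1^{|Q_1|}\cdots n_k^{|Q_k|}$). By the identifications recorded in Section~\ref{SS:wirtinger}, $Conj_N(Q_N(\Gamma)) = \pi_1^N(\Gamma)$ when $\Gamma$ is a link, and $\pi_1^N(\Gamma)$ is the further quotient $Conj_N(Q_N(G))/\langle w_i\rangle$ when $\Gamma = G$ is a graph; a quotient of a finite group is finite, so $\pi_1^N(\Gamma)$ is finite. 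This is exactly the content of Corollary~\ref{C:QtoG}, so I would simply cite it.

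For the reverse direction, assume $\pi_1^N(\Gamma)$ is finite. Since $P_i$ is a subgroup of $\pi_1^N(\Gamma)$, the coset space $P_i\backslash\pi_1^N(\Gamma)$ is finite for each $i$ (of size at most $|\pi_1^N(\Gamma)|$). By Theorem~\ref{T:GtoQ} (links) or Theorem~\ref{T:GtoQgraph} (graphs), the quandle $(P_i\backslash\pi_1^N(\Gamma);\mu_i)$ is isomorphic to the $i$th algebraic component $Q_N^i(\Gamma)$, so each $Q_N^i(\Gamma)$ is finite. Since $Q_N(\Gamma)$ has only $k$ components and each is finite, $Q_N(\Gamma)$ is finite, with $|Q_N(\Gamma)| \le k\,|\pi_1^N(\Gamma)|$.

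There is no real obstacle here: every ingredient is a previously established result, and the proof is bookkeeping. The one point that deserves a sentence of care is making sure the ``parallel'' treatment of graphs is legitimate in both directions — namely that the relevant statements (Corollary~\ref{C:QtoG} for the forward direction, Theorem~\ref{T:GtoQgraph} for the reverse) genuinely cover the graph case and that $\pi_1^N(G)$, being a quotient of $Conj_N(Q_N(G))$ by the vertex relations, sits correctly in the chain of implications. Beyond that, I would just note the finiteness of a quandle with finitely many finite components and be done.
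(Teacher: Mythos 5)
Your proposal is correct and follows essentially the same route as the paper: the forward direction is exactly Corollary~\ref{C:QtoG} (via Theorem~\ref{T:QtoG} and the identification of $\pi_1^N(\Gamma)$ as $Conj_N(Q_N(\Gamma))$ or a quotient thereof), and the reverse direction combines finiteness of the coset spaces $P_i\backslash\pi_1^N(\Gamma)$ with Theorems~\ref{T:GtoQ} and~\ref{T:GtoQgraph}. No gaps.
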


In fact, we can make the relationship between the cardinalities of $Q_N(\Gamma)$ and $\pi_1^N(\Gamma)$ explicit.  Observe that, if $\Gamma$ is a spatial graph, then $\vert P_i\vert = \vert \langle \mu_i \rangle \vert = n_i$.

\begin{corollary} \label{C:sizes}
Let $\Gamma$ be any link or spatial graph, with $k$ components (or edges). If $\pi_1^N(\Gamma)$ and $Q_N(\Gamma)$ are finite, and $P_i$ is the peripheral subgroup for the $i$th component (or edge), then $\vert \pi_1^N(\Gamma)\vert = \vert P_i \vert \vert Q_N^i(\Gamma)\vert$, for any $1 \leq i \leq k$. In particular, if $\Gamma$ is a spatial graph, then $\vert \pi_1^N(\Gamma)\vert = n_i \vert Q_N^i(\Gamma)\vert$.
\end{corollary}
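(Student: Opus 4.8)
The plan is to combine the coset-quandle isomorphism from Theorems~\ref{T:GtoQ} and~\ref{T:GtoQgraph} with an elementary counting argument about cosets. By Theorem~\ref{T:GtoQ} (for links) and Theorem~\ref{T:GtoQgraph} (for spatial graphs), the $i$th algebraic component $Q_N^i(\Gamma)$ is isomorphic, as a set, to the coset space $P_i\backslash \pi_1^N(\Gamma)$. Since $\pi_1^N(\Gamma)$ is a finite group, Lagrange's theorem (in its left-coset form) gives immediately that $\vert \pi_1^N(\Gamma)\vert = \vert P_i\backslash \pi_1^N(\Gamma)\vert \cdot \vert P_i\vert = \vert Q_N^i(\Gamma)\vert \cdot \vert P_i\vert$, which is the desired identity $\vert \pi_1^N(\Gamma)\vert = \vert P_i\vert\,\vert Q_N^i(\Gamma)\vert$ for each $1 \le i \le k$.

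For the ``in particular'' clause, I would invoke the observation already recorded in the excerpt just before the corollary: when $\Gamma$ is a spatial graph, $P_i$ is the cyclic subgroup generated by the meridian $\mu_i$, and the defining relation $\mu_i^{n_i}=1$ in $\pi_1^N(\Gamma)$ forces $\vert P_i\vert = \vert\langle\mu_i\rangle\vert$ to divide $n_i$. In fact one wants equality $\vert P_i\vert = n_i$, which follows because the quotient $\pi_1^N(G)/\langle \mu_i \rangle$-type arguments, or more simply the fact that $\mu_i$ has exact order $n_i$ in $\pi_1^N(G)$ (there is no relation collapsing a smaller power of $\mu_i$, as can be seen from the surjection to $\pi_1(\mathbb{S}^3-G)/\langle \mu_i^{n_i}\rangle$ and a standard meridian non-triviality argument, or is simply assumed from the construction). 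Substituting $\vert P_i\vert = n_i$ into the general formula yields $\vert \pi_1^N(\Gamma)\vert = n_i\,\vert Q_N^i(\Gamma)\vert$.

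The only genuine subtlety — and hence the main obstacle — is verifying that $\vert P_i\vert$ is exactly $n_i$ rather than merely a divisor of $n_i$ in the spatial graph case; for links the analogous quantity $\vert P_i\vert$ involves both the meridian and longitude and is simply left as $\vert P_i\vert$ in the statement, so no such issue arises there. I expect this point to be handled by the same reasoning that underlies the topological interpretation in Section~\ref{SS:topological}: the meridian $\mu_i = \nu m_\nu \nu^{-1}$ has order exactly $n_i$ because the $N$-meridian moves of Definition~\ref{D:n-meridian} are precisely powers of $n_i$, so no shorter power of $m_\nu$ becomes trivial. Everything else is a one-line application of Lagrange's theorem, so the write-up will be short: state the isomorphism of sets, apply the index formula, then specialize.
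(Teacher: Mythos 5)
Your argument is essentially identical to the paper's: the paper derives the corollary by combining the coset-space bijections of Theorems~\ref{T:GtoQ} and~\ref{T:GtoQgraph} with Lagrange's theorem, and for the spatial graph case simply records the observation that $\vert P_i\vert = \vert\langle\mu_i\rangle\vert = n_i$. The one point you flag as a subtlety --- that $\mu_i$ has order exactly $n_i$ rather than a proper divisor of $n_i$ in $\pi_1^N(G)$ --- is asserted without further justification in the paper as well, so your write-up, while more cautious, follows the same route.
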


\begin{example}
Consider the knotted tetrahedron $G$ in Figure~\ref{F:K4knot}, with the labeling $N = (3, 3, 2, 2, 2, 2)$. As we will see, $Q_N(G)$ is finite, but attempts to compute it directly using {\it Mathematica} \cite{Me2} proved extremely lengthy.  However, $\pi_1^N(G)$ was computed very quickly using Miller's implementation of the Todd-Coxeter algorithm \cite{Mi}, and we found $\vert \pi_1^N(G) \vert = 2880$.  Hence $\vert Q_N^1(G)\vert = \vert Q_N^2(G)\vert = 2880/3 = 960$ and $\vert Q_N^3(G) \vert = \vert Q_N^4(G) \vert = \vert Q_N^5(G) \vert = \vert Q_N^6(G) \vert = 2880/2 = 1440$.  So $\vert Q_N(G) \vert = 960(2) + 1440(4) = 7680$.

\begin{figure}[h]
$$\scalebox{1}{\includegraphics{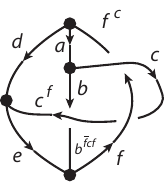}}$$
\caption{A knotted tetrahedron.}
\label{F:K4knot}
\end{figure}

\end{example}

\begin{example}
In \cite{BM}, Mellor and Backer Peral computed the $N$-quandle for the graph $G(k,m,n)$ shown in Figure~\ref{F:Gkmn}, with $N = (2,2,m,n,2,2)$. In particular, $\vert Q_N(G(k,m,n))\vert = 4kmn+2km+2kn$, where $\vert Q_N^1\vert = \vert Q_N^2\vert = \vert Q_N^5\vert = \vert Q_N^6\vert = kmn$, $\vert Q_N^3\vert = 2kn$ and $\vert Q_N^4\vert = 2km$. Therefore $\vert\pi_1^N(G(k,m,n))\vert = n_1\vert Q_N^1\vert = 2kmn$.

\begin{figure}[h]
$$\scalebox{.6}{\includegraphics{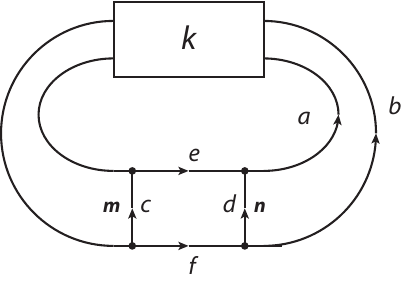}}$$
\caption{The spatial graph $G(k, m, n)$, where $k$ indicates the number of positive half-twists in the block.}
\label{F:Gkmn}
\end{figure}

\end{example}

\section{Links and spatial graphs with finite $N$-quandles} \label{S:finite}


We can use Theorem \ref{T:QiffG} to give a complete list of the links with finite $N$-quandles. This extends the classification of links with finite $n$-quandles given by Hoste and Shanahan \cite{HS2}. Suppose $L$ is a link with $k$ components, $N = (n_1, \dots, n_k)$ is a $k$-tuple of positive integers, and $Q_N(L)$ is finite. By Theorem \ref{T:QiffG}, $\pi_1^N(L)$ is also finite.

Define $\mathcal{O}(L, N)$ to be the 3-orbifold with underlying space $\mathbb{S}^3$ and singular locus $L$, where the $i$th component of $L$ is labeled by $n_i$. (See \cite{CHK} for more information on orbifolds.) $\mathcal{O}(L,N)$ has a universal cover, and the group of covering transformations is the orbifold fundamental group $\pi_i^{orb}(\mathcal{O}(L,N))$. In this case, we have (by \cite[Theorem 2.9 ff.]{CHK})
$$\pi_i^{orb}(\mathcal{O}(L,N)) = \pi_1(\mathbb{S}^3 - L)/\langle \mu_i^{n_i} \rangle = \pi_1^N(L)$$
So the orbifold fundamental group is finite, which means the universal cover is a compact, simply-connected manifold. By Thurston's geometrization theorem, this means the universal cover is a sphere. Hence $\mathcal{O}(L, N)$ is a spherical 3-orbifold.  This proves:

\begin{theorem} \label{T:linkclass}
A link $L$ with $k$ components has a finite $(n_1,\dots, n_k)$-quandle if and only if there is a spherical orbifold with underlying space $\mathbb{S}^3$ whose singular locus is the link $L$, with component $i$ labeled $n_i$.
\end{theorem}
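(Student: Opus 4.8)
The plan is to prove Theorem~\ref{T:linkclass} by combining the algebraic equivalence already established in Theorem~\ref{T:QiffG} with the orbifold machinery sketched in the paragraph preceding the statement. The logical skeleton is: $Q_N(L)$ finite $\iff$ $\pi_1^N(L)$ finite $\iff$ $\pi_1^{orb}(\mathcal{O}(L,N))$ finite $\iff$ $\mathcal{O}(L,N)$ is spherical. The first equivalence is exactly Theorem~\ref{T:QiffG}, so nothing new is needed there. The second is the identification $\pi_1^{orb}(\mathcal{O}(L,N)) = \pi_1(\mathbb{S}^3-L)/\langle\mu_i^{n_i}\rangle = \pi_1^N(L)$, which follows from the Wirtinger-style presentation of an orbifold fundamental group when the singular locus is a link with cyclic vertex (here, meridian) relations; I would cite \cite[Theorem 2.9 ff.]{CHK} for this. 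So the real content is the third equivalence, which is where I would concentrate the argument.

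First I would recall that $\mathcal{O}(L,N)$, having underlying space $\mathbb{S}^3$ and singular locus a link (a $1$-submanifold, so every local group is a finite cyclic rotation group), is automatically a \emph{good} orbifold whenever it is irreducible or can be shown to be covered by a manifold; more to the point, by the orbifold version of Thurston's geometrization theorem (the Orbifold Theorem of Boileau--Leeb--Porti, or Thurston's original statement for the Haken case), a closed orientable $3$-orbifold whose underlying space is $\mathbb{S}^3$ decomposes into geometric pieces. For the ``only if'' direction: if $\pi_1^{orb}$ is finite, then the universal cover $\widetilde{\mathcal{O}}$ is a closed, simply-connected $3$-manifold (the universal cover of a good orbifold is a manifold, and finiteness of the deck group forces compactness), hence homeomorphic to $S^3$ by the Poincar\'e conjecture; since $\widetilde{\mathcal{O}}$ admits a $\pi_1^{orb}$-action, $\mathcal{O}(L,N)$ is modeled on $S^3$, i.e.\ spherical. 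For the ``if'' direction: a spherical orbifold is by definition the quotient of $S^3$ by a finite group of isometries, so $\pi_1^{orb}(\mathcal{O}(L,N))$ is that finite group, hence $\pi_1^N(L)$ is finite, hence $Q_N(L)$ is finite by Theorem~\ref{T:QiffG}.

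The step I expect to be the main obstacle is verifying that finiteness of $\pi_1^{orb}$ genuinely implies sphericity, rather than merely implying that the orbifold is \emph{good} with compact universal cover: one must rule out the possibility that the universal cover is a homotopy sphere that is not $S^3$ (resolved by Perelman) and, more subtly, must know that the orbifold is good in the first place so that ``universal cover'' makes sense as a manifold. For link singular loci in $\mathbb{S}^3$ this goodness is classical — it follows, e.g., from the fact that such orbifolds are finitely covered by manifold branched covers, or from the Orbifold Theorem applied to the (possibly reducible) orbifold — but I would state it carefully and cite \cite{CHK} and the Orbifold Theorem rather than reprove it. A secondary, purely bookkeeping point is that the theorem as stated quantifies over the \emph{existence} of a spherical orbifold with the prescribed underlying space and labels; since $\mathcal{O}(L,N)$ is the unique such orbifold up to the labeling, ``there is a spherical orbifold\ldots'' is equivalent to ``$\mathcal{O}(L,N)$ is spherical,'' which is what the chain of equivalences delivers. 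I would close by remarking that the spherical $3$-orbifolds with underlying space $\mathbb{S}^3$ have been classified (Dunbar), so Theorem~\ref{T:linkclass} translates into the explicit finite list recorded in the Appendix, matching the conjecture of \cite{MS}.
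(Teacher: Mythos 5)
Your proposal follows essentially the same route as the paper: Theorem~\ref{T:QiffG} reduces the statement to finiteness of $\pi_1^N(L)$, the identification $\pi_1^{orb}(\mathcal{O}(L,N)) = \pi_1^N(L)$ is cited from \cite{CHK}, and geometrization converts finiteness of the orbifold fundamental group into sphericity of $\mathcal{O}(L,N)$. You are in fact somewhat more careful than the paper, which leaves the converse direction (spherical $\Rightarrow$ finite) implicit and does not address goodness of the orbifold before asserting that its universal cover is a manifold.
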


Unlike links, not every spatial graph is the singular locus of a 3-orbifold. For a graph to be a singular locus, it must be trivalent, and the 3 labels at each vertex must be $(2, 2, k)$ (where $k \geq 2$), $(2, 3, 3)$, $(2, 3, 4)$ or $(2, 3, 5)$ \cite[Theorem 2.5]{CHK}. But, in these cases, the same argument we used for Theorem \ref{T:linkclass} proves:

\begin{theorem} \label{T:graphclass}
Suppose a graph $G$ with $k$ edges, with edge $i$ labeled $n_i$, is the singular locus of a 3-orbifold.  Then $G$ has a finite $(n_1,\dots, n_k)$-quandle if and only if the orbifold is spherical, with underlying space $\mathbb{S}^3$.
\end{theorem}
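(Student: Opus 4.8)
The plan is to imitate the proof of Theorem~\ref{T:linkclass} almost word for word. Theorem~\ref{T:QiffG} already supplies the quandle-to-group reduction for spatial graphs as well as links (via Theorem~\ref{T:GtoQgraph}), so the only genuinely new point is to identify $\pi_1^N(G)$ with the orbifold fundamental group of $\mathcal{O}(G,N)$ in the graph setting. Since $G$ is assumed to be the singular locus of a $3$-orbifold, it is trivalent and each vertex carries one of the admissible label triples $(2,2,k)$, $(2,3,3)$, $(2,3,4)$, $(2,3,5)$, so the orbifold $\mathcal{O}(G,N)$ with underlying space $\mathbb{S}^3$, singular locus $G$, and edge $i$ labeled $n_i$ is well-defined and uniquely determined by this data. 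The first step is to verify, from the Wirtinger-type presentations of Section~\ref{SS:wirtinger}, that
$$\pi_1^{orb}(\mathcal{O}(G,N)) \;=\; \pi_1(\mathbb{S}^3 - G)/\langle \mu_i^{n_i}\rangle \;=\; \pi_1^N(G).$$
The content of this equality is that the vertex relations $w_i = 1$ already present in $\pi_1(\mathbb{S}^3 - G)$, together with the cone relations $\mu_i^{n_i}=1$, present exactly the local vertex groups: for instance at a $(2,2,k)$ vertex with incident meridians $x,y,z$ one has $xyz=1$ and $x^2=y^2=z^k=1$, hence $\langle x,y\mid x^2=y^2=(xy)^k=1\rangle = D_k$, and similarly the remaining three types give $A_4$, $S_4$, $A_5$, so no further relations are needed. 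This is the graph analogue of the fact cited from \cite[Theorem 2.9 ff.]{CHK} in the link case.

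With this identification in hand the two implications run exactly as for links. For the forward direction, if $Q_N(G)$ is finite then $\pi_1^N(G)$ is finite by Theorem~\ref{T:QiffG}, hence $\pi_1^{orb}(\mathcal{O}(G,N))$ is finite; the orbifold universal cover is then a closed simply-connected $3$-manifold, which by geometrization is $\mathbb{S}^3$, so $\mathcal{O}(G,N) = \mathbb{S}^3/\pi_1^{orb}(\mathcal{O}(G,N))$ is spherical with underlying space $\mathbb{S}^3$. Conversely, if $\mathcal{O}(G,N)$ is spherical with underlying space $\mathbb{S}^3$, it is $\mathbb{S}^3/\Gamma$ for a finite $\Gamma < SO(4)$, so $\pi_1^N(G) = \pi_1^{orb}(\mathcal{O}(G,N)) = \Gamma$ is finite, and Theorem~\ref{T:QiffG} gives that $Q_N(G)$ is finite.

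I expect the main obstacle to be the forward direction's tacit use of goodness: to say the orbifold universal cover is a manifold one needs $\mathcal{O}(G,N)$ to be a good orbifold, and a closed $3$-orbifold with finite fundamental group can in general fail to be good if it contains a bad $2$-suborbifold (a teardrop $\mathbb{S}^2(n)$ or a spindle $\mathbb{S}^2(p,q)$ with $p\neq q$). A fully careful treatment would either rule such suborbifolds out in $\mathcal{O}(G,N)$ — any embedded $2$-sphere meeting $G$ transversally bounds a ball in $\mathbb{S}^3$, which severely constrains the resulting suborbifold — or invoke the $3$-dimensional orbifold geometrization theorem in the packaged form ``a closed orientable $3$-orbifold with no bad $2$-suborbifold and finite fundamental group is spherical,'' which is exactly the black box the link proof already relies on. A secondary, purely bookkeeping point is the vertex-group computation in the first step, which must be checked for each of the four admissible vertex types, but this is routine once the presentations are written out.
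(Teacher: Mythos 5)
Your proposal follows the paper's proof exactly: the paper proves Theorem~\ref{T:graphclass} ``by the same argument as for links,'' i.e.\ identify $\pi_1^{orb}(\mathcal{O}(G,N))$ with $\pi_1^N(G)$ via the quotient-by-meridians description, reduce finiteness of $Q_N(G)$ to finiteness of $\pi_1^N(G)$ using Theorem~\ref{T:QiffG}, and apply geometrization to conclude the orbifold is spherical. Your additional care --- checking the local vertex groups for the four admissible label triples and flagging the goodness/bad-$2$-suborbifold hypothesis needed to invoke orbifold geometrization --- fills in details the paper leaves implicit rather than departing from its route.
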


Dunbar \cite{DU} classified all geometric, non-hyperbolic 3-orbifolds. He provided a list of all spherical 3-orbifolds with underlying space $\mathbb{S}^3$ and singular locus a link or spatial graph $L$; hence, it is also the list of all links, and all graphs which satisfy the conditions to be a singular locus of an orbifold, in $\mathbb{S}^3$ with finite $N$-quandle for some $N$. The sizes and structures of the $n$- and $N$-quandles of many of these links and graphs have been determined \cite{BM, CHMS, HS1, Me, MS}. The list is provided in the Appendix.

\section{Questions for further investigation} \label{S:questions}

Finally, we pose a few questions for future study. While we have completely classified the links with finite $N$-quandles, we have not done the same for spatial graphs. 

\begin{question}
Are there spatial graphs with finite $N$-quandles which are {\em not} the singular locus of a spherical 3-orbifold?
\end{question}

One approach might be to explore how various operations on spatial graphs affect the fundamental quandle. We know that deleting edges from a graph with a finite $N$-quandle, or splitting an edge with a vertex of valence two, yield a new graph with a finite $N$-quandle \cite{BM}.  But there are many other graph operations (such as contracting edges) that could be explored.

\medskip

Even among the links and spatial graphs that are known to have finite $N$-quandles, the precise size and structure are not all known. In particular, in Table \ref{linktable} in the Appendix, the 2-quandles of the links in the last row, and the $N$-quandles for the graphs created by adding struts to the rational tangles of the links in the last two rows, have not been completely described.  

\begin{question}
What are the sizes and structures for the finite $N$-quandles that are not described in \cite{BM, CHMS, HS1, Me, MS}?
\end{question}

Finally, the fundamental quandle is a complete invariant for (unoriented) knots and (unoriented, non-split) links; but it is not known how powerful it is for spatial graphs. As we saw in Section \ref{S:fundamental}, the fundamental quandle of a knot or link immediately determines the fundamental group.  For a spatial graph, on the other hand, to recover the fundamental group from the fundamental quandle you also need to know the vertex relations, which depend on the particular diagram. So the fundamental quandle on its own is likely not a complete invariant; but what other information is needed to construct a complete invariant?

\begin{question}
To what extent does the fundamental quandle of a spatial graph determine the spatial graph? What other information is needed to give a complete invariant for unoriented, connected spatial graphs?
\end{question}

\bibliographystyle{plain}
\bibliography{Nquandle.bib}

\newpage
\section*{Appendix: Links and graphs with finite $N$-quandles} \label{S:Appendix}

\begin{table}[htbp]
{$
\begin{array}{cccc}
\includegraphics[width=1.25in,trim=0 0 0 0,clip]{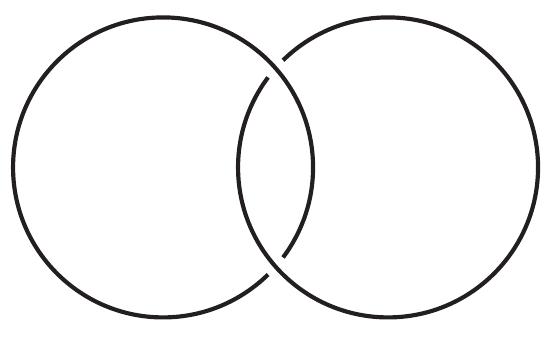}   & \includegraphics[width=1.25in,trim=0 0 0 0,clip]{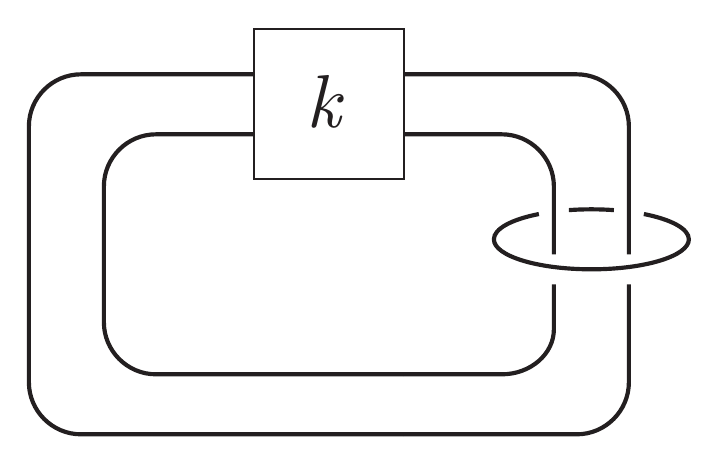}  & \includegraphics[width=1.0in,trim=0 0 0 0,clip]{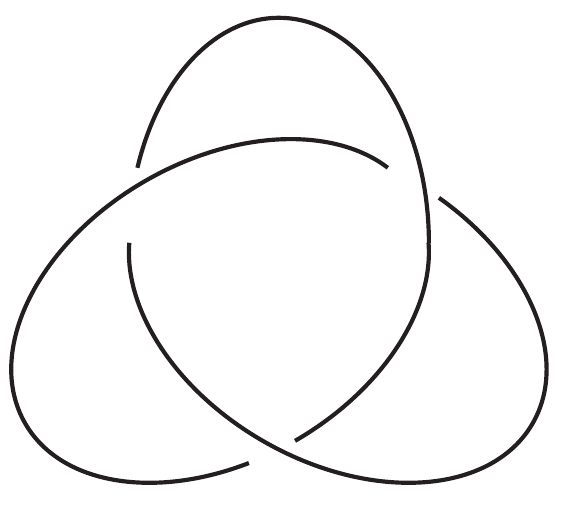} \\
\scriptstyle n > 1 & \scriptstyle k \neq 0,\  n=2& \scriptstyle n=3, 4, 5 \\
\\
\includegraphics[width=1.0in,trim=0 0 0 0,clip]{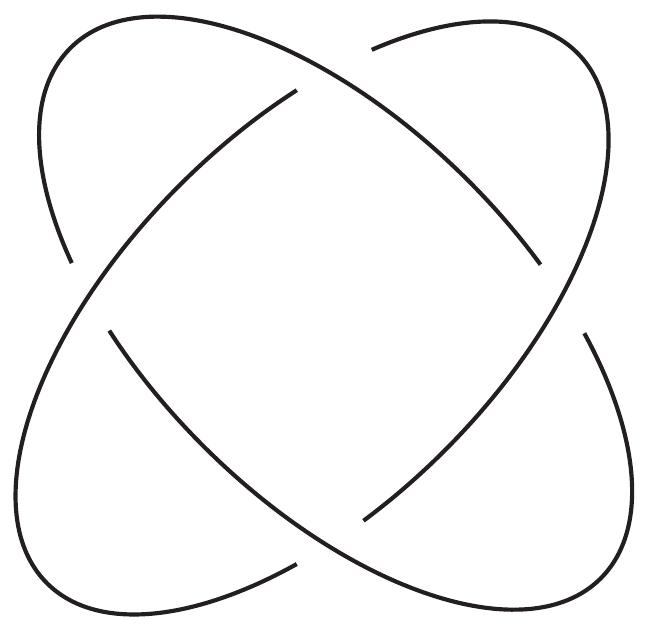}  & \includegraphics[width=1.0in,trim=0 0 0 0,clip]{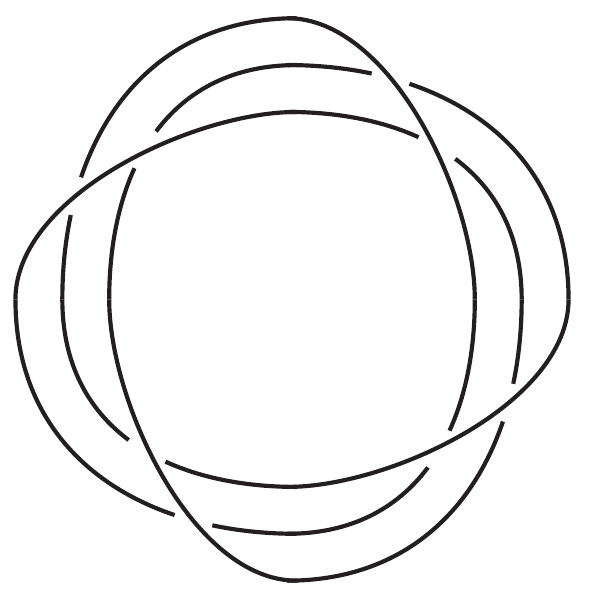}  & \includegraphics[width=1.0in,trim=0 0 0 0,clip]{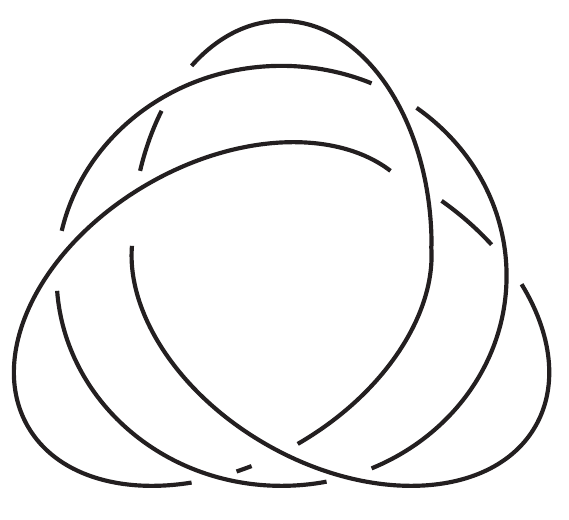} \\
\scriptstyle n =3 & \scriptstyle n=2& \scriptstyle n=2 \\
\\
\includegraphics[width=1.0in,trim=0 0 0 0,clip]{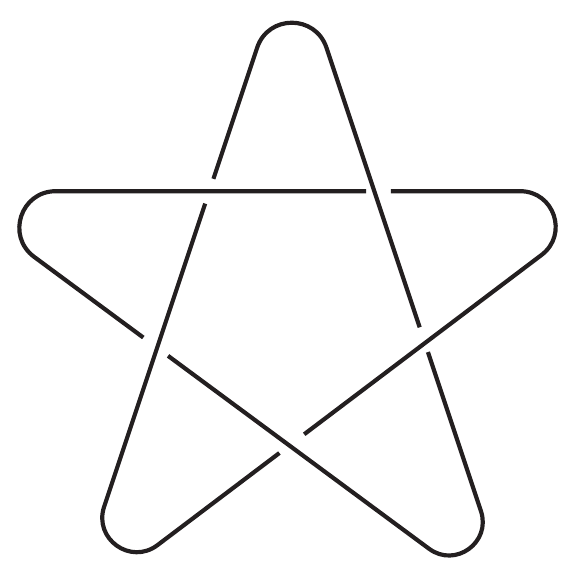}  & \includegraphics[width=1.0in,trim=0 0 0 0,clip]{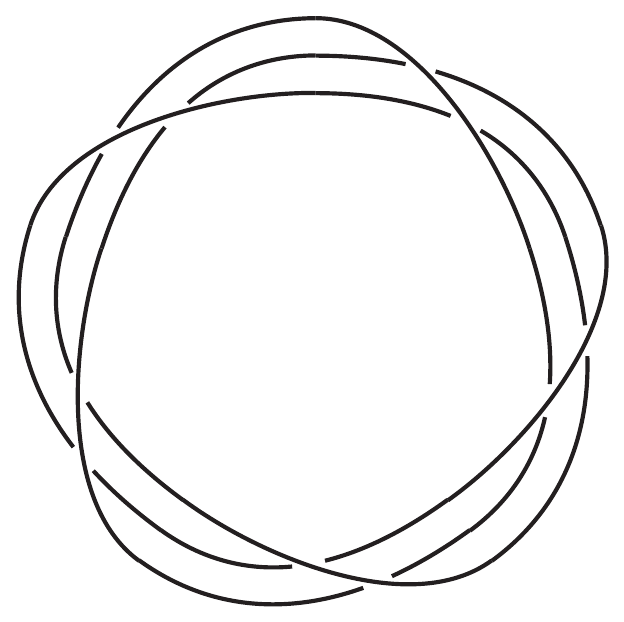}  & \includegraphics[width=1.25in,trim=0 0 0 0,clip]{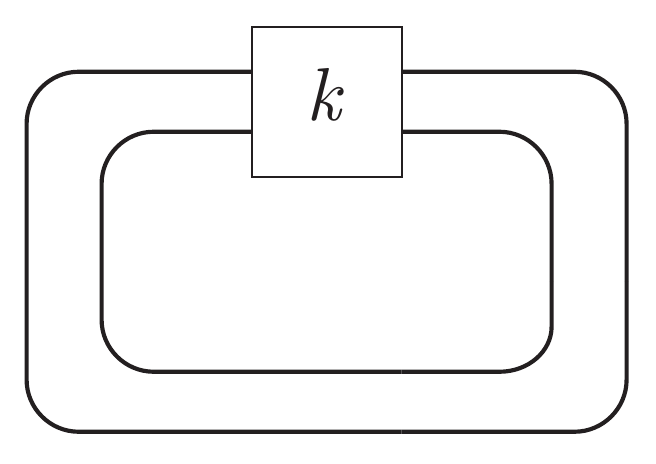} \\
\scriptstyle n =3 & \scriptstyle n=2& \scriptstyle k\neq0,\ n=2 \\
\\
\includegraphics[width=1.25in,trim=0 0 0 0,clip]{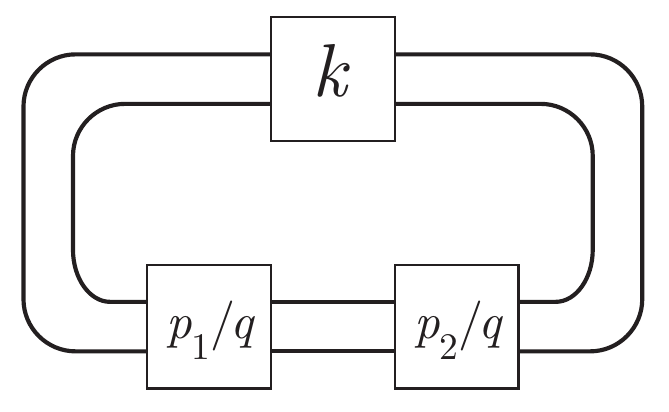}  & \includegraphics[width=1.15in,trim=0 5pt 0 0,clip]{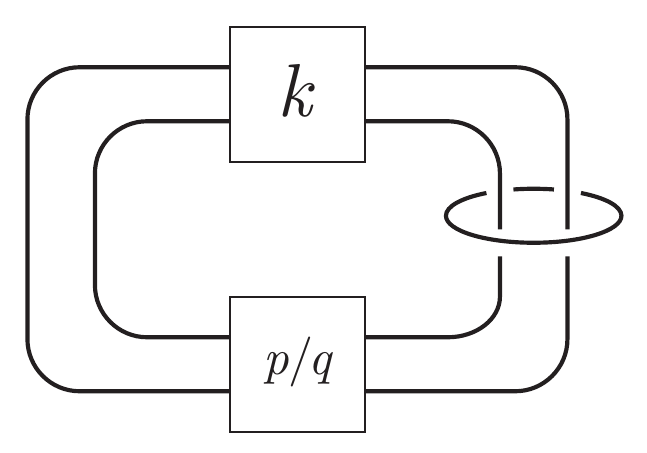} &  \includegraphics[width=1.65in,trim=0 0 0 0,clip]{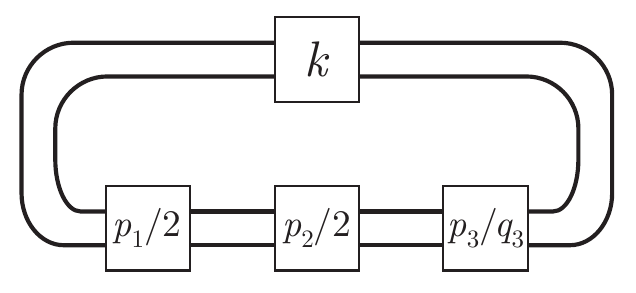} \\
\scriptstyle k+p_1/q+p_2/q \neq 0,\ n =2  &\scriptstyle n=2& \scriptstyle k+p_1/2+p_2/2+p_3/q_3 \neq 0,\ n =2 \\
\\
\includegraphics[width=1.65in,trim=0pt 0pt 0pt 0pt,clip]{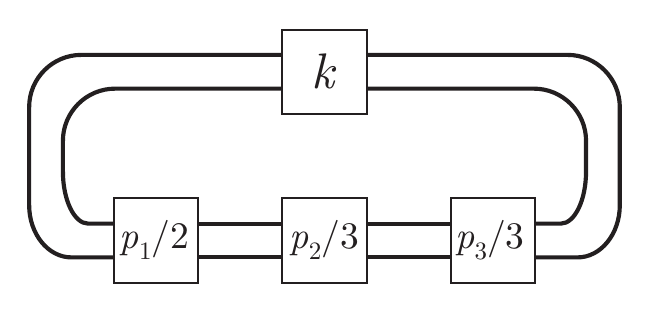}  & \includegraphics[width=1.65in,trim=0pt 0pt 0pt 0pt,clip]{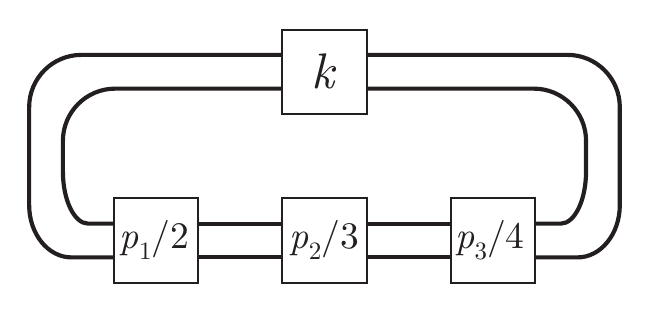}  & \includegraphics[width=1.65in,trim=0pt 0pt 0pt 0pt,clip]{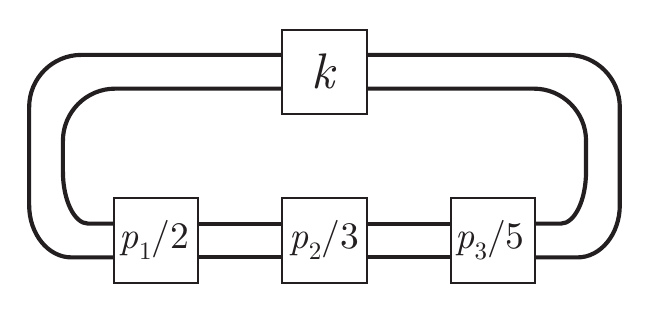}\\
\scriptstyle k+p_1/2+p_2/3+p_3/3 \neq 0,\ n =2  & \scriptstyle k+p_1/2+p_2/3+p_3/4 \neq 0,\ n =2  & \scriptstyle k+p_1/2+p_2/3+p_3/5 \neq 0,\ n =2
\end{array}
$}
\bigskip

\caption{\parbox{3.75in}{Links $L \in \mathbb{S}^3$ with finite $Q_n(L)$. Here \fbox{$k$} represents $k$ right-handed half-twists, and \fbox{$p/q$} represents a rational tangle. If $p$ and $q$ are not relatively prime, the tangle contains a ``strut" labeled $\gcd(p,q)$ \cite{DU}, and the resulting spatial graph has a finite $N$-quandle.}}
\label{linktable}
\end{table}

\newpage

\begin{table}[htbp]
{$
\begin{array}{ccc}
\includegraphics[width=1.5in,trim=0 0 0 0,clip]{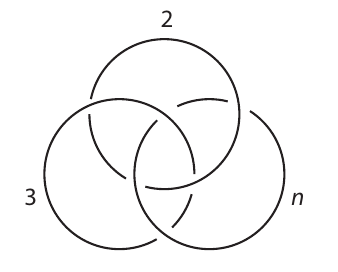}   & \hspace{.5in}& \includegraphics[width=1.25in,trim=0 0 0 0,clip]{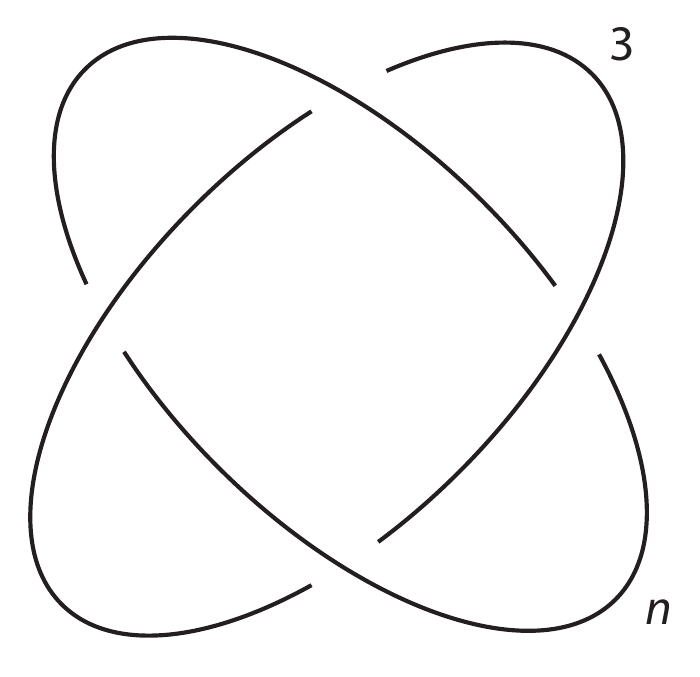}  \\
\scriptstyle L = T_{3,3};\ N = (2, 3, n);\ n = 3, 4, 5 && \scriptstyle L = T_{2,4}; N = (3, n);\ n=3, 4, 5 \\
\\
\includegraphics[width=1.25in,trim=0 0 0 0,clip]{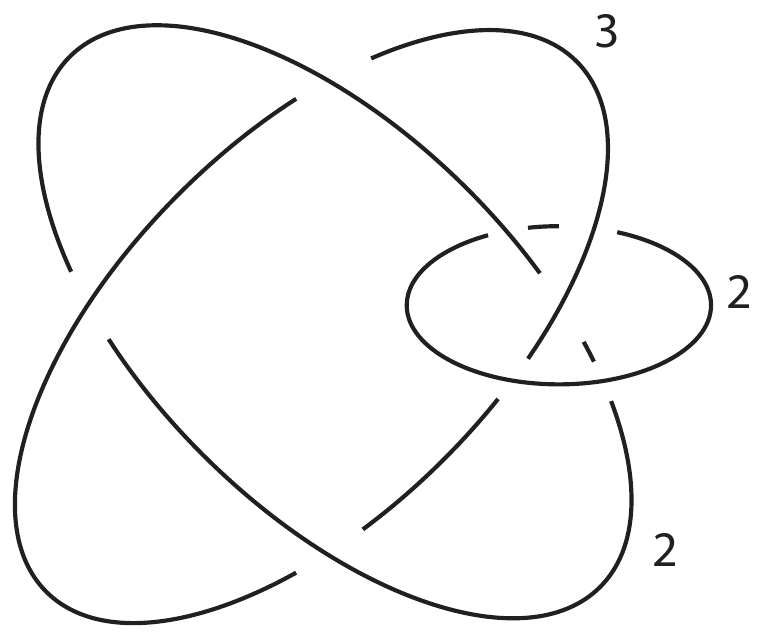} && \includegraphics[width=1.5in,trim=0 0 0 0,clip]{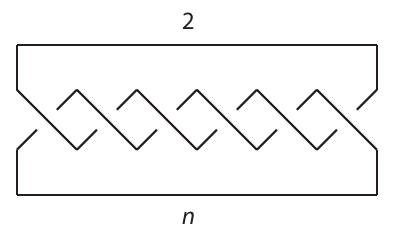}  \\
\scriptstyle L = T_{2,4} \cup C; N=(2,2,3) && \scriptstyle L = T_{2,6}; N = (2, n);\ n=3, 4, 5 \\
\\
 \includegraphics[width=1.75in,trim=0 0 0 0,clip]{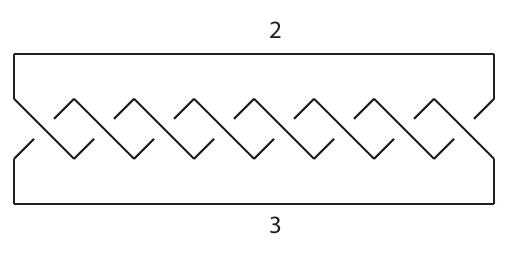}  && \includegraphics[width=2in,trim=0 0 0 0,clip]{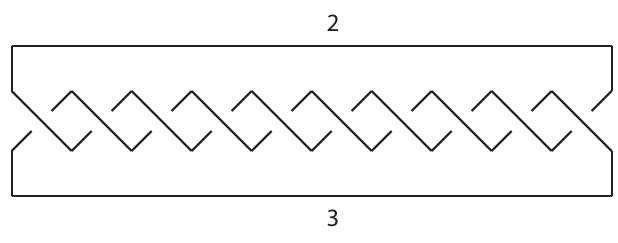} \\
\scriptstyle L = T_{2,8}; N = (2, 3) && \scriptstyle L = T_{2,10}; N = (2, 3) \\
\includegraphics[width=1.5in,trim=0 0 0 0,clip]{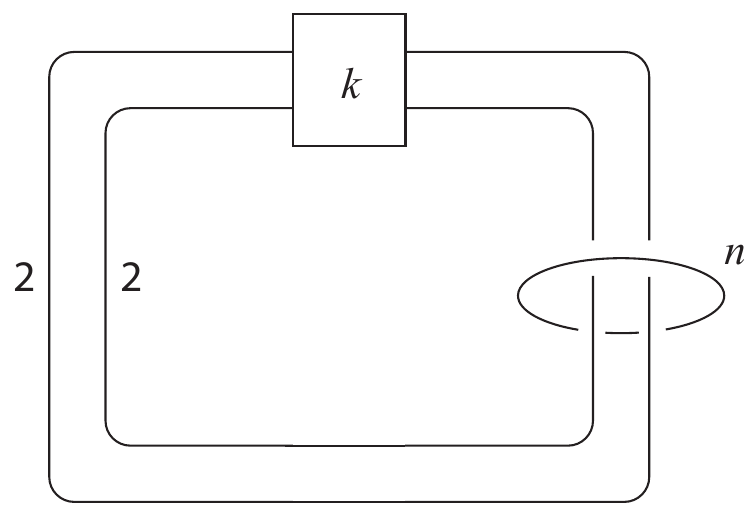}  && \includegraphics[width=1.5in,trim=0 0 0 0,clip]{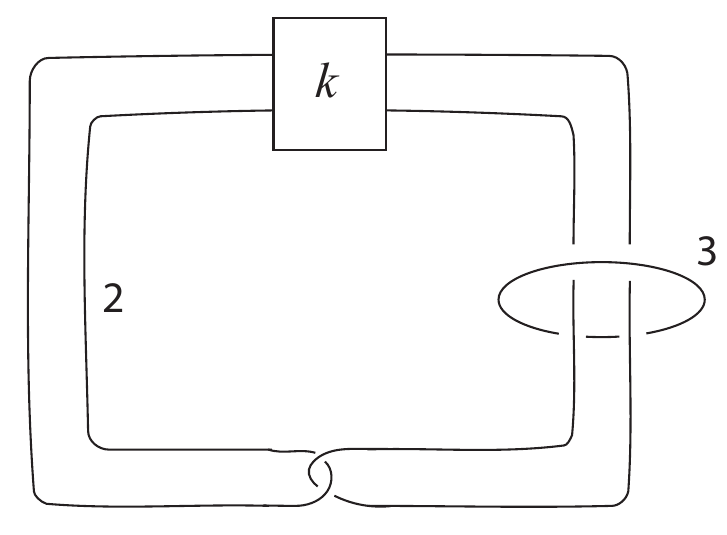}  \\
\scriptstyle L_k = T_{2,k} \cup C; N = (2, n)\ {\rm or}\ (2, 2, n);\ n > 1;\ k \neq 0 && M_k = T_k \cup C; \scriptstyle N = (2, 3)
\end{array}
$}
\bigskip

\caption{Other links $L \in \mathbb{S}^3$ with finite $Q_N(L)$.}
\label{linktable2}
\end{table}

\begin{table}[htbp]
\begin{center}
{$
\begin{array}{ccc}
\includegraphics[height=1in,trim=0 0 0 0,clip]{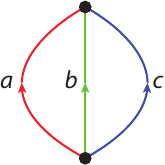} & \hspace{.5in} & \includegraphics[height=1in,trim=0 0 0 0,clip]{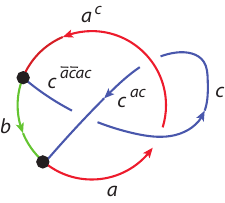} \\
\scriptstyle \text{Theta graph } \theta_3 & & \scriptstyle \text{Knotted theta graph } KT \\
\scriptstyle N = (2,2,2), (3,2,2), (n, 3, 2); n = 3, 4, 5 & & \scriptstyle N = (3, 3, 2) \\
&& \\
\end{array}
$}
{$
\begin{array}{ccc}
\includegraphics[height=.8in,trim=0 0 0 0,clip]{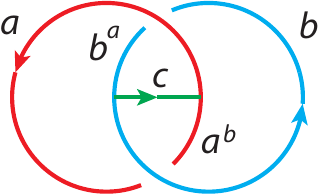} & \hspace{.5in} & \includegraphics[height=1in,trim=0 0 0 0,clip]{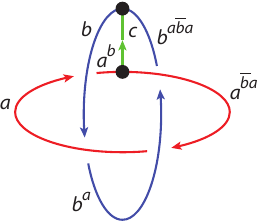}  \\
\scriptstyle \text{Hopf Handcuff graph } H_1 & & \scriptstyle \text{2-linked Handcuff graph } H_2  \\
\scriptstyle N = (3, 2, 2), (3, 3, 2) & & \scriptstyle N = (3, 2, 2)  \\
&& \\
\end{array}
$}
{$
\begin{array}{ccc}
\includegraphics[height=.8in,trim=0 0 0 0,clip]{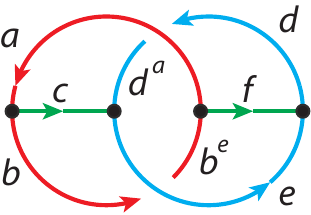} & \hspace{.5in} & \includegraphics[height=1in,trim=0 0 0 0,clip]{K4Knot.pdf}  \\
\scriptstyle \text{Double Handcuff graph } DH & & \scriptstyle \text{Knotted } K_4  \\
\scriptstyle N = (2,2,2,3,2,2), (2,2,3,3,2,2), (2,2,2,3,2,4) & & \scriptstyle N = (3,3,2,2,2,2) \\
&& \\
\end{array}
$}
{$
\begin{array}{ccc}
& \includegraphics[height=1in,trim=0 0 0 0,clip]{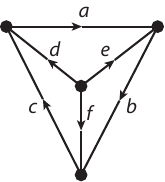} &  \\
& \scriptstyle \text{Planar } K_4 & \\
& \scriptstyle N = (3, n, 2,2,2,2), (3, 3, 2, 2, 2, n); n = 2, 3, 4, 5 & \\
& \scriptstyle N = (3,3,3,2,2,2), (3,4,2,2,2,3) & \\
&& \\
\end{array}
$}
\end{center}
\bigskip

\caption{Other graphs with finite $N$-quandles.}
\label{T:exceptional}
\end{table}


\end{document}